\documentclass{article}

\usepackage[english]{babel}

\usepackage[letterpaper,top=3cm,bottom=3cm,left=3cm,right=3cm,marginparwidth=1.75cm]{geometry}
 \usepackage{enumitem}
\usepackage{amsmath}
\usepackage{amssymb}
\usepackage{placeins}
\usepackage{graphicx}
\usepackage{cite}
\usepackage[colorlinks=true, allcolors=blue]{hyperref}
\usepackage[noblocks]{authblk}
\usepackage{algorithm}

\usepackage{algpseudocode}   
\usepackage{float}          
\usepackage{algpseudocode}
\usepackage{amsthm}
\usepackage{arydshln}
\newlength\figureheight
\newlength\figurewidth
\usepackage{amssymb}
\usepackage{mathdots}
\usepackage{multirow}
\usepackage{float}
 \usepackage{bm}
\usepackage{xcolor}
\usepackage{algpseudocode}
\usepackage{mathtools}
\usepackage{tikz}
\usepackage{tikzscale}
\usetikzlibrary{matrix}
\usepackage{pgfplots}
\usepackage{pdfpages}
\usepackage{hyperref}
\usepackage{mathtools}
\usepackage[noblocks]{authblk}
\usepackage{verbatim}
\usepackage{tikz}
\usetikzlibrary{plotmarks}
\usepackage{subcaption}

\usepackage{tablefootnote}

\usepackage{tikz}
\usetikzlibrary{matrix,calc}
\usepackage{sidecap}

\hypersetup{
	colorlinks=true,
	linkcolor=blue,      
}
\usetikzlibrary{arrows.meta,calc,
	tikzmark,pgfplots.groupplots,external}
\pgfplotsset{compat=1.18}

\tikzexternaldisable

\usepackage{url}
\usepackage{amsthm}
\newtheorem{theorem}{Theorem}[section]
\newtheorem{lemma}[theorem]{Lemma}

\newtheorem{problem}{Problem}[section]
\newtheorem{proposition}{Proposition}[section]

\newtheorem{example}{Example}[section]

\pgfplotsset{compat=1.18}
\begin{document}
	\title{Updating and Downdating the Recurrences of Discrete Multiple Orthogonal Polynomials on the Real Line}
	\providecommand{\keywords}[1]{\textit{Keywords: } #1}
	\date{\today}

	\author[1a*]{Amin Faghih}
	\author[1b]{Marc Van Barel}
	\author[1c]{Raf Vandebril}
	\affil[1]{Department of Computer Science, KU Leuven, Leuven, Belgium}

\affil[a*]{Corresponding author: Amin Faghih \texttt{amin.faghih@kuleuven.be}}
\affil[b]{ \texttt{marc.vanbarel@kuleuven.be}}
\affil[c]{ \texttt{raf.vandebril@kuleuven.be}}

	\renewcommand*{\Affilfont}{\small\it} 

	\maketitle	
\begin{abstract}
We study multiple orthogonal polynomials with orthogonality defined by multiple positive discrete measures on the real line. Focusing on type~I and type~II multiple orthogonal polynomials and their step-line recurrence relations, we consider the reconstruction of the associated banded upper Hessenberg recurrence matrix from given nodes and weights of the measures. We propose efficient updating and downdating procedures that modify an existing recurrence matrix when nodes are added to or removed from the discrete measures. The updating strategy is based on solving an inverse eigenvalue problem, while the downdating procedure relies on an eigenvalue deflation technique inspired by a QR-type of algorithm. Numerical experiments confirm the stability and efficiency of the proposed approaches.
\end{abstract}
\begin{keywords}
 Multiple orthogonal polynomials, Discrete inner product, Inverse eigenvalue problem, Deflation of eigenvalues.
\end{keywords}
\\
\begin{small} {\textbf{AMS subject classification: }} 33C45, 15A63, 65F18, 15B99.
	\end{small}
	\renewcommand{\thefootnote}{\fnsymbol{footnote}}

	\section{Introduction}
Orthogonal polynomials play a fundamental role in the design and analysis of numerical methods \cite{Gautschi04,Liesenn,OlSlTo20}. Typically, for classical families, such as Jacobi polynomials, explicit recursion formulas are available and allow their direct construction \cite{MR2542683}. When the orthogonality is defined through a more general inner product such as 
\[
\langle f, g \rangle_N = \sum_{i=1}^{N} |\alpha_i|^2\,  f(z_i)\,\overline{g(z_i)},
\]
with nodes $z_i\in \mathbb{C}$ and weights $\alpha_i \in \mathbb{C}$, which does not correspond to a classical family, one typically relies on numerical linear algebra–based approaches to compute the associated orthogonal polynomials in a stable and efficient manner. These orthogonal polynomials can be described through a recurrence relation whose coefficients can be grouped together in a Hessenberg matrix; for more information, we refer to the book of Bultheel and Van Barel \cite{MarcBook}. One approach for generating such polynomials with respect to a positive semi-definite inner product relies on manipulating this recurrence matrix directly. The solution of this problem is often referred to as solving an inverse eigenvalue problem (IEP) \cite{MR928047,Chu98}. In particular, a method for computing this Hessenberg recurrence matrix is based on repeatedly adding nodes until all nodes defining the inner product have been incorporated, that is, an updating procedure.

In the special case of orthonormal polynomials with real nodes $z_i$, Gragg and Harrod \cite{GraggH} proposed an algorithm for constructing the associated Jacobi matrix, which is Hermitian and tridiagonal. This procedure originates from earlier work by Rutishauser \cite{Ru} and offers a significant computational advantage compared with methods designed for general nodes $z_{i}$. When the nodes lie on the unit circle, $z_i \in \mathbb{C}$ with $|z_{i}|=1$, a related strategy yields a unitary Hessenberg matrix rather than a Jacobi matrix \cite{MR1102398}. The core idea of this approach has been used to develop updating procedures for several IEPs \cite{VanbVanbVand22,Vanb23,Faghih2025Sobolev}.

Instead of adding a node to the inner product, one may also consider the removal of a node, leading to what is commonly referred to as a downdating problem. Such downdating procedures are inherently more challenging to carry out in a numerically stable manner \cite{MR1089163}. Results concerning the downdating of orthogonal polynomials in the case where the nodes $z_i$ lie on the unit circle can be found in Ammar, Gragg, and Reichel \cite{MR1168509}. The eigenvector deflation technique of Mastronardi and Van Dooren \cite{MR3867618} for removing a prescribed eigenvalue from a Hessenberg matrix provides insight into the design of stable downdating strategies for orthogonal polynomial recurrences. An alternative approach is based on a QR-type technique, originally developed for unitary Hessenberg matrices, which isolates a known eigenvalue and was introduced by Ammar et al. \cite{MR1150072}. Van Barel, Van Buggenhout, and Vandebril \cite{MR4731119} employed both eigenvector and QR-type algorithms to downdate the recurrences of orthogonal polynomials and rational functions. In practice, updating and downdating are used, for example, in recursive least squares and sliding-window least squares, where data points are incrementally added and discarded, allowing matrix factorizations to be modified efficiently instead of recomputed \cite{GolubVanLoan2013,Bjorck1996,FaVBVBVa24}.

In this paper, we generalize these updating and downdating procedures to the case of multiple orthogonal polynomials (MOPs). Multiple orthogonal polynomials generalize standard orthogonal polynomials by requiring orthogonality with respect to $r$ measures $\omega_{1},\omega_{2},\ldots,\omega_{r}$ \cite{MR1662713,MR1458825,MR2542683,MR2358391,MR1808581}. In this text, we deal with positive discrete measures on $\mathbb{R}$ \cite{MR1985676}
 \begin{equation}\label{MEASURE}
    \omega_j = \sum_{i=1}^{N} \alpha_{j,i} \delta_{z_{i}}, \; \; \alpha_{j,i} > 0, \; z_{i} \in \mathbb{R}, \; N \in \mathbb{N}, \; 1 \le j \le r,
\end{equation}
where the nodes $z_i$ are distinct and $\delta_{z_i}$ denotes the Dirac measure at $z_i$. The support of each discrete measure $\omega_j$ is the closure of $\{z_i\}_{i=1}^N$, and we denote by $\Delta$ the smallest closed interval in $\mathbb{R}$ containing $\{z_i\}_{i=1}^N$. Let $\bm{n}=(n_1,\ldots,n_r)\in\mathbb{N}^r$ be a multi-index, and define its total order by $|\bm{n}|=\sum_{j=1}^r n_j$. Multiple orthogonal polynomials are commonly divided into two classes. Type~I MOPs are given by a vector of polynomials
$(A_{\bm{n},1},\ldots,A_{\bm{n},r})$, where each component satisfies
$\deg A_{\bm{n},j}= n_j-1$. These polynomials are characterized by the orthogonality conditions
\begin{equation}\label{type1}
\sum_{i=1}^N \Big( \sum_{j=1}^r \alpha_{j,i}\, A_{\bm{n},j}(z_{i})\Big) z_{i}^k  =0,
\qquad k=0,1,\ldots,|\bm{n}|-2.
\end{equation}
A type~II multiple orthogonal polynomial associated with $\bm{n}$ is a single polynomial
$P_{\bm{n}}$ of degree $|\bm{n}|$ satisfying
\begin{equation*}
\sum_{i=1}^N \alpha_{j,i}\, P_{\bm{n}}(z_{i}) z_{i}^k =0,
\qquad k=0,1,\ldots,n_j-1,\quad j=1,\ldots,r.
\end{equation*}
If we have an AT-system\footnote{An AT-system is a particular set of measures satisfying the algebraic Chebyshev property; see, e.g., \cite{MR1985676,CouVanass05,MR2542683}.} with $r$ positive discrete measures $\omega_{j}$ supported on $\Delta$, then for $|\bm{n}| \leq N$, the associated type~I and type~II MOPs are uniquely determined up to a scalar multiplicative constant. Moreover, when $|\bm n|=N$, the zeros of the type~II polynomial coincide with the prescribed nodes $\{z_i\}_{i=1}^N$. Detailed assumptions and proofs can be found in \cite[Section 2]{Faghih2025KrylovCore} and the references therein.

Multiple orthogonal polynomials admit several recurrence relations, depending on the chosen path of multi-indices in $\mathbb{N}^r$ \cite{CouVanass05,Van-nearestneighbor,FilHanVanass15,Vana24,MR2542683}. In this work, we concentrate on the \emph{step-line} recurrence (see Section \ref{Sec2}), an $(r+2)$-term relation that gives rise to an $(r+2)$-banded upper Hessenberg matrix including the recurrence coefficients \cite{MR1985676,Faghih2025KrylovCore}. In this work, given the nodes and weights of
some discrete measures, we aim to reconstruct the corresponding recurrence matrix. These nodes and weights are linked to the eigenvalues and eigenvectors of the recurrence matrix, making this an instance of an IEP \cite{MR928047,Chu98}. 

Recently, Faghih, Rinelli, Van Barel, Vandebril, and Vermeiren~\cite{Faghih2025KrylovCore} established the link between type II multiple orthogonal polynomials and standard Krylov subspaces, as well as between type I MOPs and block Krylov subspaces. Building on this connection, they developed a Lanczos-type algorithm with multiple starting vectors to generate biorthogonal pairs of type I and type II discrete MOPs, addressing the corresponding inverse eigenvalue problem. This specialized variant of the Lanczos approach was introduced by Aliaga, Boley, Freund, and Hern\'andez~\cite{MR1665942}. They also implemented an efficient alternative approach based on core transformations. 

In this paper, we propose updating and downdating procedures which are more memory efficient than the Lanczos and core transformation methods, since they do not require storing the entire basis. Furthermore, the updating and downdating procedures are more flexible, as they allow the reuse of existing recurrence relations to construct new recurrences under modest modifications of the inner product, such as adding or deleting nodes. This flexibility is not possible with other approaches, which would require a costly restart from scratch. Updating procedures start from an available solution of an IEP and efficiently compute the solution to an IEP where the discrete inner product underlying the biorthogonal type I and type II MOPs is enlarged by one node. Conversely, downdating removes a node and its weights, producing a recurrence matrix of one dimension lower. 

The updating procedure in this research is primarily based on similarity transformations performed on the Hessenberg recurrence matrix \cite{MR3227393}. The downdating strategy is formulated using an eigenvalue deflation technique \cite{MR3867618}.
Numerically, the computation of MOP bases via the bi-orthogonal Lanczos and core transformation algorithms \cite{Faghih2025KrylovCore} are typically so ill-conditioned that the computation of MOPs of large degree becomes infeasible in double-precision arithmetic; see \cite[Section~6]{Faghih2025KrylovCore}. The proposed scaling strategy, together with the nature of the updating procedure, substantially improves the conditioning and enables the computation of larger sequences of MOPs than those reported in \cite{Faghih2025KrylovCore}.

The remainder of this paper is organized as follows. In Section~\ref{SSec2}, we review fundamental properties of MOPs, including their recurrence relations and biorthogonality structure. We then reformulate the construction of biorthogonal type~I and type~II MOPs as an equivalent matrix problem, leading to an IEP. Section~\ref{SSec3} introduces an updating procedure together with a novel scaling strategy designed to enhance numerical stability. In Section~\ref{SSec4}, we develop a downdating approach based on deflating an eigenvalue from the recurrence matrix. Section~\ref{SSec5} presents numerical experiments that demonstrate the effectiveness, stability, and accuracy of the proposed methods. Finally, Section \ref{sec:conclusion} gives a summary and discusses possible directions for future research.
\section{Problem formulation}\label{SSec2}
This section provides an overview of the step-line recurrences for both type I and type II MOPs, along with their biorthogonality property. We then set up the problem of generating type I and type II MOPs as an IEP.

   \subsection{Step-line recurrences}\label{Sec2}
MOPs admit different types of recurrence relations depending on the path chosen for the multi-indices in $\mathbb{N}^r$ \cite{CouVanass05,Van-nearestneighbor,FilHanVanass15,Vana24,MR2542683}. Here, we focus on the step-line indexing and consider the associated type I and type II MOPs
\[
A_{n,j}(x) = A_{\bm n,j}(x), \quad P_n(x) = P_{\bm n}(x), \qquad n = kr + \ell, \quad 
\bm n = (\underbrace{k+1,\dots,k+1}_{\ell},k,\dots,k),
\]
where $r$ denotes the number of measures. It is known \cite{CouVanass05,MR1985676,Faghih2025KrylovCore} that the sequence $\{P_n\}_{n=0}^{N-1}$ satisfies an $(r+2)$-term recurrence relation of the form
\[
xP_n(x) = a_{n+1,r+1} P_{n+1}(x) + \sum_{j=0}^{r} a_{n,j} P_{n-j}(x), \qquad 0 \le n \le N-1,
\]
with $P_0$ being a polynomial of degree zero (i.e., a nonzero constant) and $P_{-j}\equiv 0$ for $1\le j\le r$.

We assume a discrete reference measure $\omega=\sum_{i=1}^N \delta_{z_i}$ and write $d\omega_j(x)=\alpha_j(x)d\omega(x)$. The type I functions associated with the step-line indexing are defined by \cite{CouVanass05,Faghih2025KrylovCore,MR2542683}
\[
Q_{n}(x) = \sum_{j=1}^r A_{ n,j}(x)\alpha_j(x),
\]
and, by construction from the type I vector polynomial orthogonality relations \eqref{type1}, they satisfy the discrete orthogonality conditions
\[
\sum_{i=1}^N Q_{n}(z_i) z_i^k =
0, \quad 0\le k \le  n-2.
\]
Along the step-line, the type I functions admit the following recurrence relation involving the same coefficients
\[
xQ_n(x)=a_{n-1,r+1}Q_{n-1}(x)+\sum_{j=0}^r a_{n+j-1,j}Q_{n+j}(x),\quad \quad 1\le n \le N-r,
\]
    with initial conditions $Q_{0} = 0$ and $Q_{1},Q_{2},\ldots,Q_{r}$.

For simplicity of presentation and implementation, we focus on the case $r=2$ throughout the remainder of the paper. Extensions to the general case $r>2$ are possible, but are not pursued here to avoid overloading the notation. In this setting, the recurrences reduce to the four-term relations
\[
xP_n(x)=a_{n+1} P_{n+1}(x)+b_nP_n(x)+c_nP_{n-1}(x)+d_nP_{n-2}(x),
\]
\[
xQ_n(x)=a_{n-1} Q_{n-1}(x)+b_{n-1}Q_n(x)+c_nQ_{n+1}(x)+d_{n+1}Q_{n+2}(x).
\]
The coefficients $\{a_{n+1},b_n,c_n,d_n\}$ define a $4$-banded upper Hessenberg matrix 
  \begin{equation}\label{recurrencematrix}
   H_{N}=
    \begin{bmatrix}
    b_0 & c_1 & d_2 & 0 & 0&0&\cdots & 0 \\
    a_{1} & b_1 & c_2 & d_3 & 0 &0& \cdots & 0 \\
    0 & a_{2} & b_2 & c_3 & d_4 &0& \cdots & 0 \\
    0 & 0 & a_{3}& b_3 & c_4 & d_5 & \cdots & 0 \\
    \vdots &  & \ddots & \ddots & \ddots & \ddots & \ddots &  \\
    0 & \cdots & 0 & 0 & a_{N-3} & b_{N-3} & c_{N-2} & d_{N-1} \\
    0 & \cdots & 0 & 0 & 0 & a_{N-2} & b_{N-2} & c_{N-1} \\
    0 & \cdots & 0 & 0 & 0 & 0 & a_{N-1} & b_{N-1}
    \end{bmatrix}.
    \end{equation}
In this work, we do not impose the monicness condition on the type II MOPs, as relaxing this constraint provides additional flexibility that can be exploited to enhance numerical stability. As a result, $P_0(x)$ is not constrained to be one, and the subdiagonal entries of the Hessenberg matrix $H_N$ generally differ from one, in contrast to the convention used in other works \cite{CouVanass05,Faghih2025KrylovCore,Vana24}.

Consequently, the step-line recurrence relation of type II MOPs admits the vector-matrix formulation
\begin{equation}\label{steplinerecurrence}
        x \left[ P_0(x)  \; \ldots \; P_{N-1}(x) \right] 
        + \left[ 0 \; \ldots \; P_N(x) \right]= \left[ P_0(x) \; \ldots \; P_{N-1}(x) \right] H_N .
\end{equation}
Under the uniqueness conditions stated in the introduction, the zeros of $P_N(x)$ coincide with the nodes $\{z_i\}_{i=1}^N$ corresponding to the measures $\omega_j$, cf.~\eqref{MEASURE}. Evaluating the step-line recurrence \eqref{steplinerecurrence} at these nodes then leads to the following eigenvalue relation
\begin{equation}\label{eq:VZ}
ZV_{N} = V_{N} H_N,
\end{equation}
    with 
    \begin{align*}
        \begin{array}{cc}
            V_{N} =
            \begin{bmatrix}
                P_0(z_1) & P_1(z_1) & \cdots & P_{N-1}(z_1) \\
                P_0(z_2) & P_1(z_2) & \cdots & P_{N-1}(z_2) \\
                \vdots   & \vdots   &        & \vdots       \\
                P_0(z_N) & P_1(z_N) & \cdots & P_{N-1}(z_N)
            \end{bmatrix},\; \;
            Z = 
            \begin{bmatrix}
                z_1 & & &\\
                 & z_2&& \\
                    & &\ddots &\\
                   & & & z_N
            \end{bmatrix}.
        \end{array}
    \end{align*}
In particular, each zero $z_i$ of $P_N$ is an eigenvalue of $H_N$ with left eigenvector $\big[P_0(z_i)\ \cdots\ P_{N-1}(z_i)\big]$.
This spectral characterization underlies the inverse eigenvalue formulation developed next.

    \subsection{Biorthogonality and inverse eigenvalue problem}
The problem of constructing biorthogonal type I functions $\{Q_{n}\}_{n=1}^{N}=\{\alpha_{1}A_{n,1}+\alpha_{2}A_{n,2}\}_{n=1}^{N}$ and type II polynomials $\{P_{n}\}_{n=0}^{N-1}$, orthogonal with respect to a given discrete inner product, can be recast as a matrix problem. In this article, we base all techniques on the corresponding matrix formulation: we examine how up- and downdating the set of nodes in the inner products translates into up- and downdating the matrix of recurrences defining the recurrence relations for type~I functions and type~II MOPs.

The discrete inner product considered throughout the paper is of the form
\begin{equation}\label{INNER}
\langle P_{n},Q_{m}\rangle _N=\sum_{i=1}^N P_n(z_i)  Q_{m}(z_i)  =\sum_{i=1}^N P_n(z_i)  \big(\alpha_{1,i}A_{m,1}(z_i)+\alpha_{2,i}A_{m,2}(z_{i})\big),
\end{equation}
where $\{z_i\}_{i=1}^{N} \in \mathbb{R}$ are the distinct nodes, and $\{\alpha_{1,i}\}_{i=1}^{N}$ and $\{\alpha_{2,i}\}_{i=1}^{N}$ are the corresponding real positive weights for the discrete positive measures $\omega_1$ and $\omega_2$, respectively. The type~I functions and type~II polynomials satisfy the following biorthogonality relation with respect to $\langle \cdot, \cdot \rangle_N$ \cite[Chapter~23]{MR2542683}
\begin{equation}\label{BIORTHOGONALITY}
\langle P_n, Q_m \rangle_N =
\begin{cases}
0, & m \le n,\\
0, & n \le m-2,\\
1, & m = n+1.
\end{cases}
\end{equation}
To rewrite \eqref{BIORTHOGONALITY} in matrix form, we introduce the $N \times N$ matrix
\begin{equation*}
W_{N} =
\begin{bmatrix}
Q_1(z_1) & Q_2(z_1) & \cdots & Q_N(z_1) \\
Q_1(z_2) & Q_2(z_2) & \cdots & Q_N(z_2) \\
\vdots   & \vdots   &        & \vdots   \\
Q_1(z_N) & Q_2(z_N) & \cdots & Q_N(z_N)
\end{bmatrix}.
\end{equation*}
Then the biorthogonality \eqref{BIORTHOGONALITY} can be expressed compactly as $
W_{N}^\top V_{N} = I_N$,
where $I_N$ denotes the $N\times N$ identity matrix. Combining the biorthogonality with \eqref{eq:VZ} shows that
\[
Z W_{N} = W_{N} H_N^\top, \qquad  W_{N}^\top Z V_{N}=H_N.
\]
As a result, the $i$-th column of $W_{N}^\top$ provides a right eigenvector of $H_N$ corresponding to the eigenvalue $z_i$.

Consider the following IEP (Problem \ref{Prob:IEP}): given a set of eigenvalues, the first two entries of the right eigenvectors and the first entry of the left eigenvectors, 
construct a structured matrix (banded upper Hessenberg) that possesses the prescribed spectral and eigenvector properties.
\begin{problem}[IEP]\label{Prob:IEP}
Given a diagonal matrix $Z =
diag(\{z_{i}\}_{i=1}^{N}) \in \mathbb{R}^{N\times N}$ with distinct nodes, and vectors $\bm{w}_{1},\bm{w}_{2}, \bm{v}_{1} \in \mathbb{R}^{N}$ satisfying
\begin{equation*}
    \bm{w}_1^{\top} \bm{v}_1 = 1, \quad \bm{w}_2^{\top} \bm{v}_1 = 0, \quad \text{and} \quad \bm{w}_2^{\top} Z \bm{v}_1 = 1,    
\end{equation*}
construct a Hessenberg matrix $H_{N}\in \mathbb{R}^{N \times N}$, as in \eqref{recurrencematrix}, and a pair of matrices $ W_{N}, V_{N} \in \mathbb{R}^{N \times N} $ such that
\begin{enumerate}[label=(A\arabic*)]
	\item\label{cond:1} The first two columns of $W_{N}$ equal $\bm{w}_1$ and $\bm{w}_2$ and the first column of $V_{N}$ is $\bm{v}_1$:
    \[
	\begin{bmatrix}
		\mid & \mid \\
		\bm{w}_1 & \bm{w}_2 \\
		\mid & \mid
	\end{bmatrix}
	= W_{N} \begin{bmatrix}
		\mid & \mid \\
		\bm{e}_1 & \bm{e}_2 \\
		\mid & \mid
	\end{bmatrix},\ \quad \text{and} \quad
	\begin{bmatrix}
		\mid \\
		\bm{v}_1 \\
		\mid 
	\end{bmatrix}
	= V_{N} \begin{bmatrix}
		\mid \\
		\bm{e}_1 \\
		\mid 
	\end{bmatrix},\]\\
	\item\label{cond:2} $W_{N}^{\top}V_{N} = I_{N}$ (biorthogonality),
	\item \label{cond:3} $W_{N}^{\top} Z V_{N}=H_{N}$.
\end{enumerate}
\end{problem}
Given a discrete inner product as in \eqref{INNER}, the problem of constructing the associated 
biorthogonal type I functions and type II polynomials amounts to solving Problem~\ref{Prob:IEP}, where the eigenvalues of $H_{N}$ correspond to the set of nodes \( \{ z_i \}_{i=1}^N \), 
while \( \bm{v}_1 \) contains the first components of the left eigenvectors, 
and \( \bm{w}_1 \) and \( \bm{w}_2 \) contain the first and second components 
of the right eigenvectors, respectively. The vectors $\bm{w}_1, \bm{w}_2, \bm{v}_1$ are obtained by using the following proposition, which is a direct application of \cite[Theorem~2]{Vana24}.
\begin{proposition}\label{prop:initialvectors}
    \cite[Proposition 3.7]{Faghih2025KrylovCore} Assume we have an AT system with the positive discrete measures $\omega_{1}$ and $\omega_{2}$ as in \eqref{MEASURE}.
    Then, the starting vectors $\bm{w}_1,\bm{w}_2$ and $\bm{v}_1$ for Problem \ref{Prob:IEP} are given as 
    \begin{align*}
        \bm{w}_{1} &= a\,\bm{\alpha}_1, \quad \quad \quad \quad \quad \quad  \quad \bm{v}_1 = \begin{bmatrix}
            P_{0} & P_{0} & \ldots &P_{0}
        \end{bmatrix}^\top,\\
        \bm{w}_2 &= c\,\bm{\alpha}_1+ b\,\bm{\alpha}_2,
    \end{align*} where
    \begin{equation*}
        a= \dfrac{1}{P_{0}\sum_{i=1}^{N}\alpha_{1,i}},\quad b=\dfrac{1}{\sum_{i=1}^N \mathcal{C}_{i}\, \alpha_{2,i}}, \quad c= -a\, b\, \sum_{i=1}^{N}\alpha_{2,i},
    \end{equation*}
    along with $\mathcal{C}_{i}=z_i-\dfrac{\sum_{j=1}^N z_j\,\alpha_{1,j}}{\sum_{j=1}^N\alpha_{1,j}}$.
\end{proposition}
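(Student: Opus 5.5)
The vectors $\bm{w}_1,\bm{w}_2,\bm{v}_1$ are, by construction of Problem~\ref{Prob:IEP}, the first columns of $W_N$ and $V_N$, that is, the samples of $Q_1$, $Q_2$ and $P_0$ at the nodes. The plan is therefore to identify these three low-degree objects explicitly from the step-line degree structure and then fix the free scalars through the three normalization conditions $\bm{w}_1^\top\bm{v}_1=1$, $\bm{w}_2^\top\bm{v}_1=0$ and $\bm{w}_2^\top Z\bm{v}_1=1$. These conditions are exactly the entries $m=1,2$ of the biorthogonality relation \eqref{BIORTHOGONALITY} that involve $P_0$ and $P_1$.

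First, $P_0$ is a nonzero constant, so $\bm{v}_1=P_0\,[1,\ldots,1]^\top$ immediately. Next, for the step-line with $r=2$, index $n=1$ gives $\bm n=(1,0)$. Hence $A_{1,1}$ is a constant and $A_{1,2}\equiv 0$, so $Q_1=a\,\alpha_1$ and $\bm{w}_1=a\bm\alpha_1$. The condition $\langle P_0,Q_1\rangle_N=1$ reads $aP_0\sum_i\alpha_{1,i}=1$, which gives the stated $a$. For $n=2$ we have $\bm n=(1,1)$, so both $A_{2,1}$ and $A_{2,2}$ are constants. This gives $Q_2=c\,\alpha_1+b\,\alpha_2$, and hence $\bm{w}_2=c\bm\alpha_1+b\bm\alpha_2$.

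The two remaining conditions are linear in $(b,c)$. Orthogonality $\langle P_0,Q_2\rangle_N=0$ gives
\[
c\sum_i\alpha_{1,i}+b\sum_i\alpha_{2,i}=0,
\qquad\text{so}\qquad
c=-b\,\frac{\sum_i\alpha_{2,i}}{\sum_i\alpha_{1,i}}=-a\,b\,P_0\sum_i\alpha_{2,i}.
\]
The normalization $\bm{w}_2^\top Z\bm{v}_1=1$ is equivalent to $\langle P_1,Q_2\rangle_N=1$ once one uses that $P_1$ is linear and that $Q_2\perp P_0$. Substituting the expression for $c$ then gives
\[
P_0\,b\sum_i\alpha_{2,i}\Big(z_i-\tfrac{\sum_j z_j\alpha_{1,j}}{\sum_j\alpha_{1,j}}\Big)=1,
\]
that is, $b\,P_0\sum_i\mathcal C_i\alpha_{2,i}=1$.

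The main obstacle is a bookkeeping point rather than a conceptual one: the role of $P_0$. The derivation above produces an extra factor of $P_0$ in $b$ and $c$, compared with the formulas as stated. I would resolve this by checking how $c$ absorbs the factor and by using the normalization convention of \cite[Proposition 3.7]{Faghih2025KrylovCore}, where effectively $P_0=1$; in the scaled setting, $b$ is to be read accordingly. I would also verify $\sum_i\mathcal C_i\alpha_{2,i}\neq0$ using the AT-system assumption, via uniqueness and non-degeneracy of $Q_2$ as guaranteed in \cite[Section 2]{Faghih2025KrylovCore}, so that $b$ is well defined.
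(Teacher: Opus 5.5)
Your computation is correct, and it follows a different route from the paper. The paper gives no derivation: it imports Proposition~3.7 of \cite{Faghih2025KrylovCore}, proved there in the monic setting $P_0=1$, and inserts $P_0$ into $a$ only. Your argument is self-contained. The step-line multi-indices $\bm n=(1,0)$ and $\bm n=(1,1)$ fix the shape of $Q_1$ and $Q_2$, and the three constraints of Problem~\ref{Prob:IEP} then form a triangular system, first for $a$ and then for $(c,b)$. What your route buys is that it tracks the dependence on $P_0$, which the citation does not.

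For that reason, do not hedge at the end. The discrepancy you found is a genuine error in the statement, not a matter of convention. The constraints force
\[
b=\frac{1}{P_0\sum_{i=1}^N\mathcal C_i\alpha_{2,i}},\qquad c=-b\,\frac{\sum_i\alpha_{2,i}}{\sum_i\alpha_{1,i}}.
\]
With the printed values one gets instead
\[
\bm w_2^\top\bm v_1=\frac{(P_0-1)\sum_i\alpha_{2,i}}{\sum_i\mathcal C_i\alpha_{2,i}},
\]
which is nonzero whenever $P_0\neq 1$, because all $\alpha_{2,i}>0$. So the proposition as printed holds only for $P_0=1$, and what you proved is the corrected version.

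Sharpen your diagnosis too: only $b$ is wrong, not $b$ and $c$. Your $c=-abP_0\sum_i\alpha_{2,i}$, evaluated with your $b$, equals $-a\sum_i\alpha_{2,i}/\sum_i\mathcal C_i\alpha_{2,i}$. That is exactly the printed $c$ evaluated with the printed $b$.

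A quick way to see the whole picture is a scaling argument. All three constraints are bilinear in $(\bm w,\bm v)$. Replacing $\bm v_1$ by $P_0\bm v_1$ therefore forces $\bm w_1$ and $\bm w_2$ to be divided by $P_0$. Hence each of $a,b,c$ must carry a factor $1/P_0$ relative to the monic formulas. The printed $a$ and $c$ do, but the printed $b$ does not.

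Two minor points. First, the constraint $\bm w_2^\top Z\bm v_1=1$ is the normalization $h_{2,1}=1$, not an entry of \eqref{BIORTHOGONALITY}. It coincides with $\langle P_1,Q_2\rangle_N=1$ only if the leading coefficient of $P_1$ equals $P_0$. This is harmless, since your computation uses the constraint directly.

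Second, proving $\sum_i\mathcal C_i\alpha_{2,i}\neq 0$ needs nothing elaborate. The quantity $\sum_i\alpha_{1,i}\cdot\sum_i\mathcal C_i\alpha_{2,i}$ is exactly the determinant of the $2\times 2$ system you solve for $(c,b)$. This is the moment determinant of the multi-index $(1,1)$, which is nonzero by normality of $(1,1)$ in an AT system. The same fact gives uniqueness of $(b,c)$ and linear independence of $\bm\alpha_1,\bm\alpha_2$.
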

Using the characterization in Proposition~\ref{prop:initialvectors}, condition~\ref{cond:1} can be rewritten as
\[
	\mathcal{X}\mathcal{R}
	= W_{N} \begin{bmatrix}
		\mid & \mid \\
		\bm{e}_1 & \bm{e}_2 \\
		\mid & \mid
	\end{bmatrix},\ \quad \text{and} \quad
	\begin{bmatrix}
		\mid \\
		\bm{v}_1 \\
		\mid 
	\end{bmatrix}
	= V_{N} \begin{bmatrix}
		\mid \\
		\bm{e}_1 \\
		\mid 
	\end{bmatrix},\]
where $\mathcal{R}=\begin{bmatrix}
    a & c \\
    0 & b
\end{bmatrix}$, and $\mathcal{X}=\begin{bmatrix}
		\mid & \mid \\
		\bm{\alpha}_1 & \bm{\alpha}_2 \\
		\mid & \mid
	\end{bmatrix} $ 
    collects the weight vectors. This representation will be used throughout the remainder of the paper, since it provides a convenient framework for tracking the transformations of both the weight vectors and the coefficients in \(\mathcal R\).

We note again that, unlike the formulation in \cite{Faghih2025KrylovCore}, we do not assume $P_0$ to be monic; instead, $P_0$ may be any nonzero constant. This led to a minor modification of Proposition~\ref{prop:initialvectors} compared to Proposition~3.7 in \cite{Faghih2025KrylovCore}, where $P_0$ appears in the denominator of $a$ instead of $1$.

Faghih et al.~\cite{Faghih2025KrylovCore} proposed a Lanczos-type algorithm and a method based on core transformations to solve Problem \ref{Prob:IEP} efficiently. In this paper, we propose updating and downdating procedures which are more memory efficient than the Lanczos and core transformation methods, since they do not require storing the entire basis. Furthermore, the updating and downdating procedures are more flexible, as they allow the reuse of existing recurrence relations to construct new recurrences under modest modifications of the inner product, such as adding or deleting nodes. This flexibility is not possible with the other approaches, which would require a costly restart from scratch.

\section{Updating the IEP}\label{SSec3}
In what follows, we first formulate the updating problem for IEP \ref{Prob:IEP} 
and subsequently present the procedure for efficiently constructing the updated recurrence matrix.
\begin{problem}[Update IEP]\label{Update:IEP}
A solution to an IEP \ref{Prob:IEP} of size \( N \) 
is assumed to be available, consisting of a Hessenberg matrix 
\( H_N \in \mathbb{R}^{N \times N} \) and a pair of biorthonormal matrices 
\( V_{N}, W_{N} \in \mathbb{R}^{N \times N} \) satisfying \ref{cond:1}, \ref{cond:2}, and \ref{cond:3}. Given additional nonzero weights \( \alpha_{1,N+1}, \alpha_{2,N+1} \) and a node 
\( z_{N+1} \notin \{ z_i \}_{i=1}^N \), compute the upper Hessenberg matrix $\widetilde{H}_{N+1}$ and the biorthonormal pair 
\( \widetilde{V}_{N+1}, \widetilde{W}_{N+1} \) satisfying
\begin{enumerate}[label=(B\arabic*)]
	\item\label{cond:11}
    \[\mathcal{X}_{\mathrm{up}}\mathcal{R}_{\mathrm{up}}
	= \widetilde{W}_{N+1} \begin{bmatrix}
		\mid & \mid \\
		\bm{e}_1 & \bm{e}_2 \\
		\mid & \mid
	\end{bmatrix},\ \quad \text{and} \quad
	\begin{bmatrix}
		\mid \\
		\bm{\tilde{v}}_1 \\
		\mid 
	\end{bmatrix}
	= \widetilde{V}_{N+1} \begin{bmatrix}
		\mid \\
		\bm{e}_1 \\
		\mid 
	\end{bmatrix},\]\\
	\item\label{cond:22} $\widetilde{W}^{\top}_{N+1}\widetilde{V}_{N+1} = I_{N+1}$,
	\item \label{cond:33} $\widetilde{W}^{\top}_{N+1} \underbrace{
\begin{bmatrix}
Z &  \\
 & z_{N+1}
\end{bmatrix}}_{= \widetilde{Z}}\widetilde{V}_{N+1}=\widetilde{H}_{N+1}$.
\end{enumerate}
Here, the subscript ``up'' refers to the updated quantities
\[\mathcal{X}_{\mathrm{up}}
= 
\begin{bmatrix}
    \mid & \mid \\
    \bm{\tilde{\alpha}}_1 & \bm{\tilde{\alpha}}_2 \\
    \mid & \mid 
\end{bmatrix}, \quad \mathcal{R}_{\mathrm{up}}=\begin{bmatrix}
    \tilde{a} & \tilde{c} \\
    0 & \tilde{b}
\end{bmatrix},\]
and the coefficients \(\tilde a\), \(\tilde b\), and \(\tilde c\) are computed according to Proposition~\ref{prop:initialvectors} using the updated weight vectors obtained by appending the new weights \(\alpha_{1,N+1}\) and \(\alpha_{2,N+1}\) to the original vectors.
That is,
\[
\boldsymbol{\tilde{\alpha}}_1 =
\begin{bmatrix}
\alpha_{1,1} & \alpha_{1,2} & \ldots & \alpha_{1,N} & \alpha_{1,N+1}
\end{bmatrix}^{\top}, 
\qquad
\boldsymbol{\tilde{\alpha}}_2 =
\begin{bmatrix}
\alpha_{2,1} & \alpha_{2,2} & \ldots & \alpha_{2,N} & \alpha_{2,N+1}
\end{bmatrix}^{\top}.
\]
\end{problem}

We now describe an updating procedure for Problem~\ref{Update:IEP}, 
demonstrating how eliminators can be used to restore and maintain the 
banded upper Hessenberg structure of a matrix, both in its upper 
and lower subdiagonal parts. The updating process begins with a known structured matrix 
\( H_{N} \in \mathbb{R}^{N \times N} \) solving an IEP of size \( N \). A new matrix 
$\widetilde{H}_{N+1} \in \mathbb{R}^{(N+1) \times (N+1)} $
is then constructed to solve a related problem of dimension \( N+1 \), 
obtained by introducing an additional node and its associated weights into the underlying discrete inner product. In general, the updating algorithm proceeds through three main stages
\begin{itemize}
    \item Extend all matrices associated with the solution 
    in \( \mathbb{R}^{N \times N} \) to higher-dimensional matrices 
    in \( \mathbb{R}^{(N+1) \times (N+1)} \), and denote the extended versions 
    by a tilde symbol.
    
    \item Impose biorthogonality with respect to the newly introduced weight vectors; 
    this step perturbs the structure of \( \widetilde{H}_{N+1} \in \mathbb{R}^{(N+1) \times (N+1)} \).
    
    \item Restore the prescribed structural form of 
    \( \widetilde{H}_{N+1} \).
\end{itemize}

Breakdowns may occur in the updating procedure discussed in this section; 
however, for simplicity, we assume that no such breakdowns take place.

We employ eliminators for the updating procedure, which are essentially \( 2 \times 2 \) 
triangular matrices. Let \( \mathfrak{L}_{N+1}\subset \mathbb{R}^{(N+1) \times (N+1)}\) denote the class of lower triangular 
eliminators and \( \mathfrak{R}_{N+1} \subset \mathbb{R}^{(N+1) \times (N+1)} \) the class of upper triangular eliminators. 
The classes \( \mathfrak{L}_{N+1} \) 
and \( \mathfrak{R}_{N+1}  \) 
consist, respectively, of matrices of the form
\[
L_{i} =
\begin{bmatrix}
I_{i-1} &  & & \\
& 1 & & \\
 & & I_{N-i} \\
 &\ell_{i} &&1
\end{bmatrix},
\qquad
R_{i} =
\begin{bmatrix}
I_{i+1} &  &  &  \\
 & 1 &  & r_{i+2} \\
 &  & I_{N-i-2} &  \\
 &  &  &1
\end{bmatrix},
\]
with parameters \( \ell_{i}, r_{i+2} \in \mathbb{R} \), appearing in the entries $(N+1,i)$ and $(i+2,N+1)$, respectively.
For further reading on how eliminators can be used to solve IEPs, we refer to \cite{VB21}.

The biorthonormal matrices \( V_{N}, W_{N} \in \mathbb{R}^{N \times N} \) and the matrix $H_{N}$ are embedded in larger
matrices while maintaining the biorthogonality property such that $\widetilde{W}^{\top}_{N+1}\widetilde{Z}
\widetilde{V}_{N+1}=\widetilde{H}_{N+1}$,
\begin{equation*}
	\widetilde{V}_{N+1}=\begin{bmatrix}
		V_{N}& \\
		& 1
	\end{bmatrix}, \quad 	\widetilde{W}_{N+1}=\begin{bmatrix}
	W_{N}& \\
	& 1
\end{bmatrix},\quad \text{and} \quad	{\widetilde{H}_{N+1}}=\begin{bmatrix}
H_{N}& \\
& z_{N+1}
\end{bmatrix}.
\end{equation*}
The first column of \(\widetilde{V}_{N+1}\) and the first two columns of \(\widetilde{W}_{N+1}\) do not yet satisfy condition~\ref{cond:11}. From Proposition \ref{prop:initialvectors}, the first and second columns of $\widetilde{W}_{N+1}$ can be written as
\begin{equation*} 
\mathcal{W}_{\mathrm{pre}}=\begin{bmatrix}
    \mid & \mid \\
    \bm{w}_1 & \bm{w}_2 \\
    \mid & \mid \\
    0 & 0
\end{bmatrix}
=
\begin{bmatrix}
    \mid & \mid \\
    a\,\bm{\alpha}_1 & c\,\bm{\alpha}_1 + b\,\bm{\alpha}_2 \\
    \mid & \mid \\
    0 & 0
\end{bmatrix}=\mathcal{X}_{\mathrm{pre}}\,
\mathcal{R}_{\mathrm{pre}},
\end{equation*}
with $\mathcal{X}_{\mathrm{pre}}
= 
\begin{bmatrix}
    \mid & \mid \\
    \bm{\alpha}_1 & \bm{\alpha}_2 \\
    \mid & \mid \\
    0 & 0
\end{bmatrix}$, and $\mathcal{R}_{\mathrm{pre}}=\begin{bmatrix}
    a & c \\
    0 & b
\end{bmatrix}$, which are extended versions of $\mathcal{X}$ and $\mathcal{R}$.

A suitable transformation is applied to $\widetilde{W}_{N+1}$ to alter the first two columns  so that
they equal $\mathcal{W}_{\mathrm{up}}=\mathcal{X}_{\mathrm{up}}\,
\mathcal{R}_{\mathrm{up}}$.
\begin{lemma}
    A transformation $T \in \mathbb{R}^{(N+1)\times(N+1)}$ exists that transforms the matrix $\widetilde{W}_{N+1}$, i.e.,
    \begin{equation*}
   \widetilde{W}_{N+1} T=\widetilde{W}_{N+1}^{[1]}=
\begin{bmatrix}
     & \mid && \mid & 0 \\
    \mathcal{W}_{\mathrm{up}} & \bm{w}_3 & \cdots & \bm{w}_N & \vdots \\
     & \mid && \mid & 0 \\
     & 0 && 0 & 1
\end{bmatrix}
\end{equation*}
\end{lemma}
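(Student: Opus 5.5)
The plan is to write $T$ down explicitly, using the fact that $\widetilde{W}_{N+1}$ is invertible with a known inverse. Condition \ref{cond:2} gives $W_N^\top V_N = I_N$, so $W_N^{-1}=V_N^\top$, and by the block-diagonal extension
\[
\widetilde{W}_{N+1}^{-1}=\widetilde{V}_{N+1}^\top=\begin{bmatrix} V_N^\top & \\ & 1\end{bmatrix}.
\]
Hence $T:=\widetilde{V}_{N+1}^\top\widetilde{W}_{N+1}^{[1]}$ satisfies the required identity. This settles existence. The real content I would aim for is that this $T$ has a sparse structure that is easy to apply and keeps the later structure-restoration step cheap.

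I would compute $T$ column by column. Columns $3,\dots,N$ of $\widetilde{W}^{[1]}_{N+1}$ coincide with those of $\widetilde{W}_{N+1}$, and so does the last column $\bm e_{N+1}$. Therefore the corresponding columns of $T$ are just $\bm e_3,\dots,\bm e_{N+1}$.

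For the first two columns, split $\mathcal{W}_{\mathrm{up}}=\mathcal{X}_{\mathrm{up}}\mathcal{R}_{\mathrm{up}}$ into its top $N$ rows $\mathcal{X}\mathcal{R}_{\mathrm{up}}$ and its last row $\begin{bmatrix}\tilde a\,\alpha_{1,N+1} & \tilde c\,\alpha_{1,N+1}+\tilde b\,\alpha_{2,N+1}\end{bmatrix}$. The key observation is that \ref{cond:1} for the size-$N$ problem reads $\mathcal{X}\mathcal{R}=W_N[\bm e_1\ \bm e_2]$. Consequently
\[
V_N^\top\mathcal{X}\mathcal{R}_{\mathrm{up}}=V_N^\top W_N[\bm e_1\ \bm e_2]\,\mathcal{R}^{-1}\mathcal{R}_{\mathrm{up}}=[\bm e_1\ \bm e_2]\,\mathcal{R}^{-1}\mathcal{R}_{\mathrm{up}}.
\]
Here $\mathcal{R}^{-1}\mathcal{R}_{\mathrm{up}}$ is a $2\times 2$ upper triangular matrix with diagonal $(\tilde a/a,\ \tilde b/b)$.

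It follows that $T$ equals the identity except for two modifications:
\begin{itemize}
\item its leading $2\times2$ block is the upper triangular matrix $\mathcal{R}^{-1}\mathcal{R}_{\mathrm{up}}$;
\item its last row has entries $\tilde a\,\alpha_{1,N+1}$ and $\tilde c\,\alpha_{1,N+1}+\tilde b\,\alpha_{2,N+1}$ in columns $1$ and $2$.
\end{itemize}
This $T$ can be factored as a diagonal and upper-triangular scaling of the first two columns, followed by two lower eliminators of the form $L_1, L_2\in\mathfrak{L}_{N+1}$.

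The step I expect to need care is nondegeneracy, so that $T$ is invertible and $\widetilde{V}^{[1]}_{N+1}=\widetilde{V}_{N+1}T^{-\top}$ preserves biorthogonality. This requires $a,b,\tilde a,\tilde b\neq0$. These hold because the weights are positive, which gives $a,\tilde a\neq 0$, and because $b,\tilde b$ are well defined under the AT-system assumption of Proposition~\ref{prop:initialvectors}. I would add a remark that $\bm v_1$ also changes consistently: the first column of $\widetilde V_{N+1}T^{-\top}$ should be $\tilde{\bm v}_1$, possibly up to adjusting $P_0$. That check belongs to the subsequent steps rather than to this lemma.
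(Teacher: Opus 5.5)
Your proof is correct, and the $T$ you obtain is the paper's $T$. Expanding the paper's product $\begin{bmatrix}\mathcal{R}_{\mathrm{pre}}^{-1} & \\ & I_{N-1}\end{bmatrix}\begin{bmatrix} I_N & 0\\ \tilde{\bm r} & 1\end{bmatrix}\begin{bmatrix}\mathcal{R}_{\mathrm{up}} & \\ & I_{N-1}\end{bmatrix}$ gives the identity, except for the leading block $\mathcal{R}^{-1}\mathcal{R}_{\mathrm{up}}$ and the last-row entries $\begin{bmatrix}\alpha_{1,N+1} & \alpha_{2,N+1}\end{bmatrix}\mathcal{R}_{\mathrm{up}}$. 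This must be so, since $\widetilde{W}_{N+1}$ is invertible and $T$ is therefore unique.

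The two routes differ in how $T$ is found.
\begin{itemize}
\item The paper writes $T$ directly in factored form: undo $\mathcal{R}_{\mathrm{pre}}$ to expose $\mathcal{X}_{\mathrm{pre}}$, append the new weights by adding multiples of the last column $\bm e_{N+1}$, then apply $\mathcal{R}_{\mathrm{up}}$. It verifies $\widetilde{W}_{N+1}T=\widetilde{W}_{N+1}^{[1]}$ by multiplication. This uses only the form of the first two and last columns of $\widetilde{W}_{N+1}$, never biorthogonality. The factored form also gives $T^{-\top}$ at once for updating $\widetilde{V}_{N+1}$.
\item You get existence and uniqueness for free from $\widetilde{W}_{N+1}^{-1}=\widetilde{V}_{N+1}^{\top}$. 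You then need \ref{cond:1} and \ref{cond:2} to extract the explicit entries.
\end{itemize}
Your factorization, the scaling $\mathcal{R}^{-1}\mathcal{R}_{\mathrm{up}}$ followed by $L_1L_2$ with multipliers $\mathcal{R}_{\mathrm{up}}^{\top}\begin{bmatrix}\alpha_{1,N+1} & \alpha_{2,N+1}\end{bmatrix}^{\top}$, is just a regrouping of the paper's three factors.

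One correction to your closing remark, which lies outside the lemma. The last row of $\widetilde{V}_{N+1}T^{-\top}$ is $\bm e_{N+1}^{\top}$, so its first column ends in a zero. It therefore cannot equal $\tilde{\bm v}_1=P_0\,\mathbf{1}$ for any nonzero choice of $P_0$. The paper repairs this with a further eliminator $L_1\in\mathfrak{L}_{N+1}$, $\ell_1=P_0$. In $\widetilde{V}_{N+1}^{[1]}$ it adds $P_0$ times the last column to the first. The matching factor $L_1^{-\top}$ changes only the last column of $\widetilde{W}_{N+1}^{[1]}$, so its first two columns are preserved.
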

\begin{proof}
    Defining $\tilde{\bm{r}} = 
\begin{bmatrix}
\alpha_{1,N+1} & \alpha_{2,N+1} & 0 & \cdots & 0
\end{bmatrix} \in \mathbb{R}^{1 \times N}$, we consider the transformation $T$ as
\begin{equation*}
T=\begin{bmatrix}
    \mathcal{R}_{\mathrm{pre}}^{-1} &  \bold{0}\\
    \bold{0} & I_{N-1} 
\end{bmatrix}
\begin{bmatrix}
I_N & 0 \\
\tilde{\bm{r}} & 1
\end{bmatrix}\begin{bmatrix}
    \mathcal{R}_{\mathrm{up}}&  \bold{0}\\
    \bold{0} & I_{N-1} 
\end{bmatrix}.
\end{equation*}
Applying this transformation to $\widetilde{W}_{N+1}$, we get
 \begin{eqnarray*}
   \widetilde{W}_{N+1} T&=&
\begin{bmatrix}
     & \mid && \mid & 0 \\
    \mathcal{W}_{\mathrm{pre}} & \bm{w}_3 & \cdots & \bm{w}_N & \vdots \\
     & \mid && \mid & 0 \\
     & 0 && 0 & 1
\end{bmatrix}\begin{bmatrix}
    \mathcal{R}_{\mathrm{pre}}^{-1} &  \bold{0}\\
    \bold{0} & I_{N-1} 
\end{bmatrix}
\begin{bmatrix}
I_N & 0 \\
\tilde{\bm{r}} & 1
\end{bmatrix}\begin{bmatrix}
    \mathcal{R}_{\mathrm{up}}&  \bold{0}\\
    \bold{0} & I_{N-1} 
\end{bmatrix}\\
&=&\begin{bmatrix}
     & \mid && \mid & 0 \\
    \mathcal{X}_{\mathrm{pre}} & \bm{w}_3 & \cdots & \bm{w}_N & \vdots \\
     & \mid && \mid & 0 \\
     & 0 && 0 & 1
\end{bmatrix}\begin{bmatrix}
I_N & 0 \\
\tilde{\bm{r}} & 1
\end{bmatrix}\begin{bmatrix}
    \mathcal{R}_{\mathrm{up}}&  \bold{0}\\
    \bold{0} & I_{N-1} 
\end{bmatrix}\\
&=&\begin{bmatrix}
     & \mid && \mid & 0 \\
    \mathcal{X}_{\mathrm{up}} & \bm{w}_3 & \cdots & \bm{w}_N & \vdots \\
     & \mid && \mid & 0 \\
     & 0 && 0 & 1
\end{bmatrix}\begin{bmatrix}
    \mathcal{R}_{\mathrm{up}}&  \bold{0}\\
    \bold{0} & I_{N-1} 
\end{bmatrix}=\widetilde{W}_{N+1}^{[1]}.
\end{eqnarray*}
Hence, the transformation \(T\) updates \(\widetilde{W}_{N+1}\) as required, completing the proof.
\end{proof}
We also set $\widetilde{V}_{N+1}^{[1]} = \widetilde{V}_{N+1} T^{-\top}$ to preserve the biorthogonality condition~\ref{cond:22}, $ \widetilde{W}_{N+1}^{[1]\top} \widetilde{V}_{N+1}^{[1]} = I_{N+1}$. Recalling the transformation $T$, we obtain
 \begin{eqnarray*}
  \widetilde{V}_{N+1}^{[1]}= \widetilde{V}_{N+1} T^{-\top}&=&
\widetilde{V}_{N+1}\begin{bmatrix}
    \mathcal{R}_{\mathrm{pre}}^{-1} &  \bold{0}\\
    \bold{0} & I_{N-1} 
\end{bmatrix}^{-\top}
\begin{bmatrix}
I_N & 0 \\
\tilde{\bm{r}} & 1
\end{bmatrix}^{-\top}\begin{bmatrix}
    \mathcal{R}_{\mathrm{up}}&  \bold{0}\\
    \bold{0} & I_{N-1} 
\end{bmatrix}^{-\top}\\
&=&\widetilde{V}_{N+1}\begin{bmatrix}
    \mathcal{R}_{\mathrm{pre}}^{\top} &  \bold{0}\\
    \bold{0} & I_{N-1} 
\end{bmatrix}
\begin{bmatrix}
I_N & -\tilde{\bm{r}}^{\top} \\
0 & 1
\end{bmatrix}\begin{bmatrix}
    \mathcal{R}_{\mathrm{up}}^{-\top}&  \bold{0}\\
    \bold{0} & I_{N-1} 
\end{bmatrix}.
\end{eqnarray*}
From this representation it follows that $
e_{N+1}^{\top}\widetilde{V}_{N+1}^{[1]} = e_{N+1}^{\top}$. An additional transformation can be applied to fix the first column of $\widetilde{V}_{N+1}^{[1]}$. Specifically, we choose $L_{1} \in \mathfrak{L}_{N+1}$ with $l_{1}=P_{0}$, which leads to the updated matrices
\[
\widetilde{V}_{N+1}^{[2]} = \widetilde{V}_{N+1}^{[1]} L_{1}, 
\qquad
\widetilde{W}_{N+1}^{[2]} = \widetilde{W}_{N+1}^{[1]} L_{1}^{-\top}.
\]

It should be noted that the transformation $L_{1}^{-\top}$ does not change the first two columns of $\widetilde{W}_{N+1}^{[1]}$. Consequently, the first two columns of $\widetilde{W}_{N+1}^{[2]}$ and the first column of $\widetilde{V}_{N+1}^{[2]}$ attain the required form, and hence condition~\ref{cond:11} is satisfied.

The change of basis implies that the relevant recurrence matrix is now defined as
\begin{equation*}
    \widetilde{H}_{N+1}^{[1]} =L_{1}^{-1} T^{\top}\widetilde{H}_{N+1} T^{-\top} L_{1},
    \label{eq:transformed_H}
\end{equation*}
which is no longer a Hessenberg matrix, and whose structure is shown as follows
 
\[
\widetilde{H}_{N+1}^{[1]} =
\left[
\begin{array}{cccccc|c}
\times & \times & \times &        &        &        & \otimes \\
\times & \times & \times & \times &        &        & \otimes \\
       & \times & \times & \times & \times &        & \vdots \\
       &        & \times & \times & \ddots & \ddots & \otimes \\
       &        &        & \times & \ddots & \times & \times \\
       &        &        &        & \ddots & \times & \times \\ \hline
\otimes & \otimes & \otimes &  \cdots & \otimes &\times & \times
\end{array}
\right].
\]
Throughout the paper, \(\times\) denotes a generic nonzero entry, whereas \(\otimes\) denotes a distinguished nonzero entry that is intended to be removed. To restore the $4$-banded Hessenberg structure, eliminators are constructed to eliminate the nonzero elements in the last row and last column, without altering the first column of $\widetilde{V}_{N+1}^{[2]} $, and the first two columns of $\widetilde{W}_{N+1}^{[2]} $. 

We remove $N - 1$ bulges from the last row of $\widetilde{H}_{N+1}^{[1]}$ and $N - 2$ from the last column to restore the proper four-banded Hessenberg form. In total, $2N - 3$ eliminations need to be performed. We start with the first element of the last row, then alternately remove one element from the last row and one from the last column, beginning with the second element of the last row, until all nonzero entries are eliminated. To eliminate the $s$-th element, for $s = 1, 2, \ldots, 2N - 3$, the corresponding eliminator is denoted by $P_{s}$. If the entry to be eliminated is located in the last row and the $j$-th column, we define
\[
P_{s} = L_{j+1} \in \mathfrak{L}_{N+1}, \qquad j = 1, \ldots, N - 1,
\]
and apply $P_{s}$ from the left, yielding
\[
\widetilde{H}_{N+1}^{[s+1]} = P_{s} \, \widetilde{H}_{N+1}^{[s]} \, P_{s}^{-1}.
\] 
The corresponding biorthogonal bases are updated as
\[
\widetilde{V}_{N+1}^{[s+2]} = \widetilde{V}_{N+1}^{[s+1]} P_{s}^{-1}, \quad 
\widetilde{W}_{N+1}^{[s+2]}= \widetilde{W}_{N+1}^{[s+1]} P_{s}^{\top}.
\] 
Conversely, if the entry to be eliminated is located in the last column and the $i$-th row, we define
\[
P_{s} = R_{i} \in \mathfrak{R}_{N+1}, \qquad i = 1, \ldots, N - 2,
\]
and apply $P_{s}$ from the right, so that
\[
\widetilde{H}_{N+1}^{[s+1]} = P_{s}^{-1} \, \widetilde{H}_{N+1}^{[s]} \, P_{s}.
\] 
The corresponding biorthogonal bases are then updated according to
\[
\widetilde{V}_{N+1}^{[s+2]} = \widetilde{V}_{N+1}^{[s+1]} P_{s}, \quad
\widetilde{W}_{N+1}^{[s+2]}= \widetilde{W}_{N+1}^{[s+1]} P_{s}^{-\top}.
\] 
The parameters used in the eliminators are given by
\[
\begin{cases}
l_{j+1} = -\dfrac{h^{[s]}_{N+1,j}}{h^{[s]}_{j+1,j}}, \\[1.2em]
r_{i+2} = -\dfrac{h^{[s]}_{i,N+1}}{h^{[s]}_{i,i+2}},
\end{cases}
\]
where $h^{[s]}_{i,j}$ denotes the $(i,j)$-th element of the matrix $\widetilde{H}_{N+1}^{[s]}$. Figure \ref{fig:elimination} provides an overview of the targeted entries for elimination and the pivots employed in the procedure.
\begin{SCfigure}[1.2]

\centering

\begin{tikzpicture}[baseline=(m.center), every node/.style={font=\scriptsize}]
  \matrix (m) [matrix of math nodes,
               left delimiter={[}, right delimiter={]},
               nodes in empty cells,
               nodes={minimum width=6mm, minimum height=5.5mm, anchor=center},
               row sep=1.2mm, column sep=1.2mm]{
    \times & \times &        &        &        &        &        \\
           & \times & \times &        &        &        &        \\
           &        & \times & \times &        &        &        \\
           &        &        & \times & \ddots & \ddots & \vdots \\
           &        &        &        & \ddots & \times & \times \\
           &        &        &        & \ddots & \times & \times \\
           &        &        &        & \cdots & \times & \times \\
  };

  \draw[line width=0.6pt]
    ($(m-1-6.north east)+(0.3mm,0)$)
    --
    ($(m-7-6.south east)+(0.3mm,0)$);

  \draw[line width=0.6pt]
    ($(m-6-1.south west)+(0,-0.3mm)$)
    --
    ($(m-6-7.south east)+(0,-0.3mm)$);

  \foreach \j in {1,2,3,4,5,6} {
    \coordinate (r\j) at (m-7-\j.center);
  }

  \foreach \i in {1,2,3,4,5} {
    \coordinate (c\i) at (m-\i-7.center);
  }

  \foreach \n/\pos in {1/r1,2/r2,4/r3,6/r4}{
    \node[
      draw,
      red,
      line width=0.6pt,
      circle,
      minimum size=4.2mm,
      inner sep=0pt
    ] at (\pos) {\textcolor{red}{\scriptsize \n}};
  }

  \foreach \n/\pos in {3/c1,5/c2,7/c3}{
    \node[
      draw,
      red,
      line width=0.6pt,
      circle,
      minimum size=4.2mm,
      inner sep=0pt
    ] at (\pos) {\textcolor{red}{\scriptsize \n}};
  }

  \foreach \num/\pos in {
    1/(m-2-1.center),
    2/(m-3-2.center),
    4/(m-4-3.center),
    3/(m-1-3.center),
    5/(m-2-4.center),
    7/(m-3-5.center),
    6/(m-5-4.center)
  }{
    \node[
      draw,
      blue,
      line width=0.6pt,
      circle,
      minimum size=4.5mm,
      inner sep=0pt
    ] at \pos {\textcolor{blue}{\scriptsize \num}};
  }

\end{tikzpicture}
\hspace{6mm}
\caption{Overview of the elimination procedure. The red circles indicate the elements to be removed, and the numbering illustrates the order in which they are eliminated. The blue circles highlight the pivot elements used in the elimination process, with numbers matching the corresponding red circles to indicate which pivot is used for each element.}
\label{fig:elimination}

\end{SCfigure}

This procedure may, however, break down if the denominator of $l_{j}$ or $r_{i+2}$ becomes very small, leading to numerical instability. Under the assumption that no breakdown occurs, we repeat this process until the proper $4$-banded upper Hessenberg matrix $\widetilde{H}_{N+1}^{[2N-2]}$ is achieved. By defining \begin{equation*}
\widetilde{V}_{N+1}=\widetilde{V}_{N+1}^{[2N-1]},\quad 
 \widetilde{W}_{N+1}=\widetilde{W}_{N+1}^{[2N-1]}, \quad  \quad \widetilde{H}_{N+1}=\widetilde{H}_{N+1}^{[2N-2]},
\end{equation*}
the condition \ref{cond:33} is satisfied and the updated IEP \ref{Update:IEP} is solved. 
\subsection{Scaling strategy}\label{Scaling}
  The core transformations and the biorthogonal Lanczos procedure developed by Faghih et al. \cite{{Faghih2025KrylovCore}} produce a Hessenberg recurrence with unit subdiagonal entries. The unit entries on the subdiagonal follow from the monicness of the type~II MOPs; see \cite[Section 3]{Faghih2025KrylovCore}. In finite-precision arithmetic, the computation of the left and right bases can be highly ill-conditioned, so that the biorthogonality $W_{N}^{\top}V_{N} = I_N$ and the recurrence relation $W_{N}^{\top} Z V_{N} = H_{N}$ hold only approximately. Instead of the monicness condition, we consider non-monic polynomials, allowing the subdiagonal entries of the Hessenberg matrix to take values different from one. This flexibility decouples the subdiagonal from the monic constraint and provides additional degrees of freedom to balance the left and right bases.

We exploit this freedom by applying a diagonal scaling that enforces the following property for each column pair $(\bm{\tilde w}_i, \bm{\tilde v}_i)$
\[
\|\text{Scaled} ~\bm{\tilde w}_i\|_2 \approx \|\text{Scaled}~\bm{\tilde v}_i\|_2, \quad i = 1,2,\dots,N+1.
\]
This choice reduces the condition numbers of $\widetilde{W}_{N+1}$ and $\widetilde{V}_{N+1}$, mitigates numerical instabilities by keeping the two bases balanced in norm, and significantly improves the preservation of the biorthogonality $\widetilde{W}_{N+1}^\top \widetilde{V}_{N+1}= I_{N+1}$ and the recurrence relation $\widetilde{W}_{N+1}^\top \widetilde{Z} \widetilde{V}_{N+1}= \widetilde{H}_{N+1}$ in finite-precision computations.

To do this, we construct diagonal scaling matrices
\[
D_W = \operatorname{diag}\Big(\frac{1}{\beta_1}, \dots, \frac{1}{\beta_{N+1}}\Big), \qquad
D_V = \operatorname{diag}\big(\beta_1, \dots, \beta_{N+1}\big),
\]
with scaling factors
\[
\beta_i = \sqrt{\frac{\|\bm{\tilde w}_i\|_2}{\|\bm{\tilde v}_i\|_2}}, \quad i = 1, 2,\dots, N+1.
\]

The bases and Hessenberg matrix are then updated as
\[
\widetilde{V}^{S}_{N+1} = \widetilde{V}_{N+1} D_V, \quad
\widetilde{W}^{S}_{N+1} = \widetilde{W}_{N+1} D_W^\top, \quad
\widetilde{H}^{S}_{N+1} = D_W \widetilde{H}_{N+1} D_V.
\]

This transformation preserves the biorthogonality while ensuring that each pair of corresponding columns has approximately equal norm.
\section{Downdating the IEP}\label{SSec4}
Downdating of a Hessenberg
matrix is formulated in Problem \ref{Downdate:IEP} and essentially amounts to removing an eigenvalue from the Hessenberg’s spectrum.
\begin{problem}[Downdate IEP]\label{Downdate:IEP}
A solution to an IEP \ref{Prob:IEP} of size \( N \) 
is assumed to be available, consisting of a Hessenberg matrix 
\( H_N \in \mathbb{R}^{N \times N} \) and a pair of biorthonormal matrices 
\( V_{N}, W_{N} \in \mathbb{R}^{N \times N} \) satisfying \ref{cond:1}, \ref{cond:2}, and \ref{cond:3}. Given a node $\tilde{z} \in \{z_{i}\}_{i=1}^{N}$, assume, without loss of generality, $\tilde{z}=z_{j}$. Denote by $\widetilde{Z}=\operatorname{diag}(\{z_i\}_{i=1,i\neq j}^{N})$ the new matrix of nodes and by
\[
\boldsymbol{\tilde{\alpha}}_r =
\begin{bmatrix}
\alpha_{r,1} & \cdots & \alpha_{r,j-1} & \alpha_{r,j+1} & \cdots & \alpha_{r,N}
\end{bmatrix}^{\top},
\qquad r=1,2,
\]
the corresponding downdated weight vectors. Compute the upper Hessenberg matrix $\widetilde{H}_{N-1}$ and the biorthonormal pair 
\( \widetilde{V}_{N-1}, \widetilde{W}_{N-1} \) satisfying
\begin{enumerate}[label=(C\arabic*)]
	\item\label{cond:111}
    \[
	\mathcal{X}_{\mathrm{down}}\mathcal{R}_{\mathrm{down}}
	= \widetilde{W}_{N-1} \begin{bmatrix}
		\mid & \mid \\
		\bm{e}_1 & \bm{e}_2 \\
		\mid & \mid
	\end{bmatrix},\ \quad \text{and} \quad
	\begin{bmatrix}
		\mid \\
		\bm{\tilde{v}}_1 \\
		\mid 
	\end{bmatrix}
	= \widetilde{V}_{N-1} \begin{bmatrix}
		\mid \\
		\bm{e}_1 \\
		\mid 
	\end{bmatrix},\]\\
	\item\label{cond:222} $\widetilde{W}^{\top}_{N-1}\widetilde{V}_{N-1} = I_{N-1}$,
	\item \label{cond:333} $\widetilde{W}^{\top}_{N-1}  \widetilde{Z}\widetilde{V}_{N-1}=\widetilde{H}_{N-1}$.
\end{enumerate}
The matrices \(\mathcal X_{\mathrm{down}}\) and \(\mathcal R_{\mathrm{down}}\) are defined by 
\[\mathcal{X}_{\mathrm{down}}
= 
\begin{bmatrix}
    \mid & \mid \\
    \bm{\tilde{\alpha}}_1 & \bm{\tilde{\alpha}}_2 \\
    \mid & \mid 
\end{bmatrix}, \quad \mathcal{R}_{\mathrm{down}}=\begin{bmatrix}
    \tilde{a} & \tilde{c} \\
    0 & \tilde{b}
\end{bmatrix},\]
where the coefficients \(\tilde a\), \(\tilde b\), and \(\tilde c\) are computed according to Proposition~\ref{prop:initialvectors}.
\end{problem}

 To tackle this matrix problem, we rely on the eigenvalue decomposition of a related Hessenberg matrix. In what follows, we present an algorithm tailored to that setting. Throughout, we restrict our attention to unreduced Hessenberg matrices. This assumption guarantees that the recurrence will not break down, meaning a complete family of biorthogonal MOPs can be generated for the chosen inner product. The Hessenberg matrix under consideration is not assumed to be normal; however, since it is diagonalizable, it admits the eigen-decomposition $
H_{N}=  W_{N}^{\top}Z V_{N}$, since $W_{N}^{\top}=V_{N}^{-1}$.

The downdating step is based on isolating the prescribed eigenvalue
\(\tilde z\) by a similarity transformation that preserves the Hessenberg structure. Assume we have a solution to Problem~\ref{Prob:IEP}, i.e., $W_{N}^{\top} Z V_{N}=H_{N}$,
where the conditions \ref{cond:1}, \ref{cond:2}, and \ref{cond:3} are satisfied. The goal is to
construct a transformation $\mathcal{S}$ which separates the eigenvalue \(\tilde z\) from
the remaining spectrum, so that
\begin{eqnarray}\label{dd}
	\mathcal{S}^{-1} \, H_{N} \, \mathcal{S}=\mathcal{S}^{-1}\, W_{N}^{\top} \, Z \, V_{N}\, \mathcal{S}&=&\begin{bmatrix}
		1& \\
		& \widetilde{W}_{N-1}^{\top}
	\end{bmatrix} \begin{bmatrix}
	\tilde{z}& \\
	& \widetilde{Z}
\end{bmatrix}\begin{bmatrix}
1& \\
& \widetilde{V}_{N-1}
\end{bmatrix}\\
\nonumber &=& \begin{bmatrix}
\tilde{z}& \\
& \widetilde{H}_{N-1}
\end{bmatrix}.
\end{eqnarray}
The trailing principal submatrix \(\widetilde H_{N-1}\) then represents the desired solution, under the conditions \ref{cond:111}, \ref{cond:222}, and \ref{cond:333}. The matrix \(\mathcal S\) denotes the structured similarity transformation associated with the chosen downdating procedure. This imposes the following three constraints on the downdating procedure:
\begin{itemize}
  \item Equation~\eqref{dd} must have the correct block form (block-diagonal), so that \(\tilde{z}\) can be removed.
  \item The matrix \(\widetilde H_{N-1}\) must remain a banded upper-Hessenberg matrix with one subdiagonal and two superdiagonals.
  \item The first column of $\widetilde{V}_{N-1}$ and the first and second columns of $\widetilde{W}_{N-1}$ must coincide with the newly defined \( \bm{\tilde{v}}_1, \bm{\tilde{w}}_1, \bm{\tilde{w}}_2 \).
\end{itemize}
\subsection{Isolation of an eigenvalue}
The QR algorithm is one of the classical methods for computing the eigenvalues of dense matrices of moderate size. Each
iteration involves a shift, and the eigenvalue closest to the shift is
gradually driven toward the lower-right corner of the matrix, where it
can eventually be isolated and deflated. By repeating this procedure,
all eigenvalues may successively be extracted. A shift is called
\emph{perfect} when it is chosen to be an exact eigenvalue of the
matrix. In exact arithmetic, such a choice leads to immediate deflation in a single QR step. In particular, eigenvector-based constructions for perfect-shift QR steps were studied by Mastronardi and Van Dooren~\cite{MR3867618}.

In the framework considered here, we instead employ RL iterations with perfect shifts, as they preserve the biorthogonal structure; see Watkins~\cite{b333}. Our construction is inspired by the eigenvector-based
approach of Mastronardi and Van Dooren~\cite{MR3867618}. The procedure consists of two perfect-shift RL steps: the first is constructed from a left eigenvector associated with the chosen shift, while the second is constructed from a corresponding right eigenvector. Assume that a number $\tilde{z}$ is available that is sufficiently close to an eigenvalue of the matrix $H_N$ (which, in theory, should be exact in our setting). The eigenvector-based method begins by computing a normalized eigenvector $\bm{x}$ such that
\[
\|(H_N - \tilde{z} I_{N}) \bm{x}\|_2 \approx \epsilon_{mach} \, \|H_N - \tilde{z} I_{N}\|_2,
\]
i.e., the residual is on the order of machine precision.  

In our framework, the eigenvector $\bm{x}$ can be obtained directly, once the solution of Problem \ref{Prob:IEP} is available. If $\tilde{z} = z_j$, then the $j$-th row of $V_N$, 
$\bm{x}_{\ell e} = [\,P_0(\tilde{z}) \; P_1(\tilde{z}) \; \cdots \; P_{N-1}(\tilde{z})\,]$, 
serves as the left eigenvector of $H_N$ corresponding to $\tilde{z}$. 
Likewise, the $j$-th row of $W_N$, 
$\bm{x}_{ri} = [\,Q_1(\tilde{z}) \; Q_2(\tilde{z}) \; \cdots \; Q_{N}(\tilde{z})\,]$, 
is the right eigenvector associated with the same eigenvalue~$\tilde{z}$. Since the first perfect-shift RL step is constructed from the left
eigenvector, we initially work with $\bm{x}_{le}$. 
The accuracy of $\bm{x}_{le}$ can be enhanced by performing a few steps of iterative refinement. The resulting refined eigenvector $\bm{\dot{x}}_{le}$ is computed with particular emphasis on the accuracy of its trailing components. The refinement strategy and the corresponding error analysis are described by Mastronardi and Van Dooren~\cite[Section~5]{MR3867618}; see also~\cite{VB21}.

We begin with the first perfect RL shift. The shifted Hessenberg matrix $H_{N}-\tilde{z} I_{N}$ is singular, and since $H_{N}$ is unreduced Hessenberg, the matrix $H_{N}-\tilde{z} I_{N}$ has rank $N-1$. Hence the nullspace of $H_{N}-\tilde{z} I_{N}$ is one-dimensional, and there exists a normalized left eigenvector $\bm{\dot{x}}_{le}$ satisfying
\[
   \bm{\dot{x}}_{le} H_{N}=\tilde{z}\, \bm{\dot{x}}_{le},
    \qquad
    \|\bm{\dot{x}}_{le}\|_2=1.
\]
This eigenvector is unique up to a scale factor $\pm 1$.

The left eigenvector $\bm{\dot{x}}_{le}$ allows us to
eliminate the first upper diagonal of the shifted matrix
$H_N-\tilde{z}I_{N}$. However, in order to remove the second upper diagonal, an additional auxiliary vector is required. To construct this vector, we consider the upper-right submatrix of order $N-1$ of the shifted matrix $H_N-\tilde{z}I_N$, denoted by $\widehat{H}_{N-1}$,
\[
\widehat{H}_{N-1}
=
\begin{bmatrix}
h_{1,2} & h_{1,3} & 0 & \cdots & 0 \\
h_{2,2}-\tilde{z} & h_{2,3} & h_{2,4} & \ddots & \vdots \\
h_{3,2} & h_{3,3}-\tilde{z} & h_{3,4} & \ddots & 0 \\
0 & h_{4,3} & h_{4,4}-\tilde{z} & \ddots & 0 \\
\vdots & \ddots & \ddots & \ddots & h_{N-2,N} \\
0 & \cdots & h_{N-1,N-2} & h_{N-1,N-1}-\tilde{z} & h_{N-1,N}
\end{bmatrix}.
\]
Since $\widehat{H}_{N-1}$ is nonsingular, the linear system $\widehat{\bm{y}}\,\widehat{H}_{N-1}
=
\bm{e}_1^{\top}$ admits a unique nontrivial solution $\widehat{\bm{y}}$. We then define $\bm{y}
=
\begin{bmatrix}
\widehat{\bm{y}} & 0
\end{bmatrix}$.
By construction, this vector satisfies
\begin{equation} \label{yasl}
\bm{y}
(H_N-\tilde{z}I)
=
\begin{bmatrix}
\times & 1 & 0 & \cdots & 0
\end{bmatrix}.
\end{equation}
Combining the relations satisfied by the left eigenvector
$\bm{\dot{x}}_{le}$ and the auxiliary vector $\bm{y}$, we obtain
\[
\begin{bmatrix}
\bm{\dot{x}}_{le} \\ \bm{y}
\end{bmatrix}
(H_N-\tilde{z}I_{N})
=
\begin{bmatrix}
0 \;\; 0 \;\; 0  \;\; \cdots \;\; 0 \\
\times \;\; 1 \;\; 0 \;\; \cdots \;\; 0
\end{bmatrix},
\]
The relations satisfied by \(\bm{\dot{x}}_{le}\) and \(\bm{y}\) form the basis for constructing sequences of upper triangular eliminators that successively remove the first and second upper diagonals of the shifted matrix \(H_N-\tilde{z}I_N\). In the present section, we use a different class of elementary eliminators from those employed in the updating procedure. For \(i=1,\ldots,N-1\), matrices of the form
\begin{equation}\label{elim}
G_i =
\begin{bmatrix}
I_{i-1} &        &        &        \\
        & 1      &        &        \\
        & \xi_i  & 1      &        \\
        &        &        & I_{N-i-1}
\end{bmatrix},
\qquad
E_i =
\begin{bmatrix}
I_{i-1} &        &        &        \\
        & 1      & \eta_i &        \\
        &        & 1      &        \\
        &        &        & I_{N-i-1}
\end{bmatrix},
\end{equation}
are referred to as lower and upper triangular eliminators used in the remainder of the paper.

Now, we construct a sequence of upper triangular eliminators
$E_{N-2},E_{N-3},\ldots,E_1$ so that $\bm{y}$ is transformed
into $\bm{e}_1^{\top}
=
\begin{bmatrix}
1 & 0 & 0 & \cdots & 0
\end{bmatrix}$. More precisely,
\[
    \bm{\tilde{y}}
    =
    \bm{y}\mathcal{E}
    =
    \pm \bm{e}_1^{\top},
    \qquad
    \mathcal{E}=E_{N-2}E_{N-3}\cdots E_1.
\]
The last component of $\widehat{\bm{y}}$, denoted by $y_{N-1}$, is nonzero. Indeed, if $y_{N-1}=0$, then, since no breakdowns occur and the corresponding upper-diagonal entries of the recurrence matrix are nonzero, relation~\eqref{yasl} implies recursively that
$\bm{y}\bm{e}_j=0$, $j=1,2,\ldots,N-2$, contradicting the nontriviality of $\widehat{\bm{y}}$. Moreover, it follows from $\bm{y}(H_{N}-\tilde{z} I_{N})=\begin{bmatrix}
\times & 1 & 0 & \cdots & 0
\end{bmatrix}$ that $\begin{bmatrix}
y_{N-2} & y_{N-1}
\end{bmatrix}
\begin{bmatrix}
h_{N-2,N}\\
h_{N-1,N}
\end{bmatrix}
=0$.
The orthogonality of these two vectors implies that the upper triangular eliminator $E_{N-2}$ eliminating $y_{N-1}$ in the product
$\bm{y}E_{N-2}$ is related to the eliminator removing
$h_{N-2,N}$ in the product $E_{N-2}^{-1}(H_N-\tilde{z}I_{N})$. We then obtain the expression
\[
(\bm{y}E_{N-2})\bigl(E_{N-2}^{-1}(H_N-\tilde{z}I_{N})\bigr)
=
\left[
\begin{array}{ccccc}
\times &
\cdots &
\times &0&
0
\end{array}
\right]\left[
\begin{array}{cccccc}
\times & \times & \otimes &  &  &  \\
\times & \times & \times & \ddots &  &  \\
 & \ddots & \ddots & \ddots & \otimes &  \\
 &  & \times & \times & \times & 0 \\
 &  &  & \times & \times & \times \\
 &  &  &  & \times & \times
\end{array}
\right].
\]
Repeating the same argument inductively shows that the eliminators transforming the vector
$\bm{y}$ into $\bm{y}\mathcal{E}=\pm \bm{e}_1^{\top}$ are the same ones transforming the shifted matrix into a Hessenberg
matrix with the second upper diagonal removed, namely
\[\mathcal{E}^{-1}(H_N-\tilde{z}I_{N})=
\left[
\begin{array}{cccccc}
\times & \times &  &  &  &  \\
\times & \times & \times &  &  &  \\
 & \ddots & \ddots & \ddots &  &  \\
 &  & \times & \times & \times &  \\
 &  &  & \times & \times & \times \\
 &  &  &  & \times & \times
\end{array}
\right].
\]
To provide a clearer overview of the aforementioned procedure, we summarize the action of the $\mathcal{E}$ as
\[
\begin{bmatrix}
\bm{\dot{x}}_{le} \\[0.2em]
\bm{y}
\end{bmatrix}
\mathcal{E}\bigl(\mathcal{E}^{-1}(H_N-\tilde{z}I)\bigr)
=
\begin{bmatrix}
0 \;\; 0 \;\; 0 \;\; \cdots \;\; 0 \\
\times \;\; 1 \;\; 0 \;\; \cdots \;\; 0
\end{bmatrix}.
\]
We now construct another sequence of upper triangular eliminators $F_{N-1},F_{N-2},\ldots,F_1$,
which transforms the $\bm{\dot{x}}_{le} \mathcal{E}$ into $
\bm{e}_1^{\top} = \begin{bmatrix} 1 & 0 & \cdots & 0 \end{bmatrix}$. More precisely, these eliminators are chosen so that
\[
    \bm{\tilde{x}}_{le} = \bm{\dot{x}}_{le} \mathcal{E}\mathcal{F}=\pm \bm{e}_1^{\top},
    \qquad
    \mathcal{F}=F_{N-1}F_{N-2}\cdots F_1.
\]
Repeating the same argument as above shows that the eliminators
transforming the vector $\bm{\dot{x}}_{le}\mathcal{E}$ into $\bm{\dot{x}}_{le}\mathcal{E}\mathcal{F}
=
\pm \bm{e}_1^{\top}$ are the same eliminators that reduce the shifted matrix
$\mathcal{E}^{-1}(H_N-\tilde{z}I_{N})$ to the lower bidiagonal matrix
\[
\mathcal{F}^{-1}\mathcal{E}^{-1}(H_N-\tilde{z}I_{N})
=
\begin{bmatrix}
0 &        &        &        \\
\times & \times &        &        \\
       & \ddots & \ddots &        \\
       &        & \times & \times
\end{bmatrix}=\mathcal{L}_{1},
\]
with the zero in position $(1,1)$, the appearance of which will be explained later. Additional details on the elimination process can be found in the work of Mastronardi and Van Dooren~\cite[Theorem~2.1]{MR3867618}. 

The overall effect of \(\mathcal E\) and \(\mathcal F\) is summarized by
\[
\begin{bmatrix}
\bm{\dot{x}}_{le} \\[0.2em]
\bm{y}
\end{bmatrix}
\mathcal{E}\mathcal{F}\bigl(\mathcal{F}^{-1}\mathcal{E}^{-1}(H_N-\tilde{z}I)\bigr)
=
\begin{bmatrix}
0 \;\; 0 \;\; 0 \;\; \cdots \;\; 0 \\
\times \;\; 1 \;\; 0 \;\; \cdots \;\; 0
\end{bmatrix}.
\]
By setting $\mathcal{R}_{1}=\mathcal{E}\mathcal{F}$, an RL step for the Hessenberg matrix $H_{N}$ with shift $\tilde{z}$ is defined by $H_{N}-\tilde{z}I_{N}=\mathcal{R}_{1}\mathcal{L}_{1}$. It then follows that
\[
\widetilde{H}_{N}^{[1]}
=\mathcal{R}_{1}^{-1}H_{N}\mathcal{R}_{1}
=\mathcal{L}_{1}\mathcal{R}_{1}+\tilde{z}I_{N},
\]
which remains in upper Hessenberg form. Hence, this update can be interpreted as a similarity transformation of $H_{N}$ induced by $\mathcal{R}_{1}$. This transforms the pair $(H_{N},\bm{\dot{x}}_{le})$ to a similar one,
\[
    (\widetilde{H}_{N}^{[1]}, \bm{\tilde{x}}_{le}) = (\mathcal{R}_{1}^{-1} H_{N} \mathcal{R}_{1},\; \bm{\dot{x}}_{le}\mathcal{R}_{1}).
\]

To further clarify the structure of $\mathcal R_1$, $\mathcal L_1$, and the updated matrix $\widetilde{H}_{N}^{[1]}$, we now provide some additional insight into the RL construction.

The shifted Hessenberg matrix $H_{N}-\tilde{z} I_{N}$ is singular. Therefore, in the RL factorization $
H_{N}-\tilde{z} I_{N}=\mathcal{R}_{1}\,\mathcal{L}_{1}$,
the lower triangular factor $\mathcal{L}_{1}$ must also be singular.  Consequently, at least one diagonal entry of $\mathcal{L}_{1}$ should be zero. Since $H_{N}-\tilde{z} I_{N}$ is an unreduced Hessenberg matrix, its last $N-1$ rows are linearly independent. Moreover, since $\mathcal{R}_{1}$ is a product of nonsingular eliminators, it is itself nonsingular. Hence, its rows are linearly independent. It therefore follows that the last $N-1$ rows of $\mathcal{L}_{1}$ must also be linearly independent. Hence the only possible vanishing diagonal entry of $\mathcal{L}_{1}$ is the first one. This reveals that the first row of \(\mathcal{R}_{1}^{-1}\) in the RL factorization of \(H_{N} - \tilde{z} I_{N}\) is the left eigenvector corresponding to \(\tilde{z}\), and we also have 
\[
\bm{e}_1^{\top} (\widetilde{H}_{N}^{[1]}-\tilde{z}I_{N})=0.
\]
As a consequence, the updated matrix $\widetilde{H}_{N}^{[1]}$ attains the following structure
\[\widetilde{H}_{N}^{[1]}= \left[ \begin{array}{c|cccccc} \tilde{z}& & & & & & \\ \hline \otimes & \times & \times & \times & & & \\ & \times & \times & \times & \ddots & & \\ & & \ddots & \ddots & \ddots & \times & \\ & & & \times & \times & \times & \times \\ & & & & \times & \times & \times \\ & & & & & \times & \times \end{array} \right]. \]
Although the first perfect RL step moves the eigenvalue \(\tilde z\) to the leading diagonal position $(1,1)$, the eigenvalue is not yet completely isolated, since the entry in position \((2,1)\) of the updated matrix \(\widetilde{H}_{N}^{[1]}\) generally remains nonzero. To completely isolate the eigenvalue \(\tilde z\), we therefore perform a second perfect RL step. In contrast to the previous construction, which was based on the left eigenvector, this step is built from the corresponding right eigenvector $\bm{x}_{ri}$. As before, the accuracy of the eigenvector can be improved by performing a few steps of iterative refinement, yielding a refined eigenvector \(\bm{\dot{x}}_{ri}\), following the refinement procedure described at the beginning of this section.
In the next step, which corresponds to the perfect RL step, a sequence of lower triangular eliminators $G_{N-1} \cdots G_2 G_1 $ is performed to transform $\bm{\dot{x}}_{ri}$ into $\bm{e}_1 = \begin{bmatrix} 1 & 0 & \cdots & 0 \end{bmatrix}^{T}$. Define the lower triangular matrix $\mathcal{L}_{2}= \prod_{i=N-1}^{1} G_i$. The eliminators transforming the vector $\bm{\dot{x}}_{ri}$ into $\mathcal{L}_{2}\bm{\dot{x}}_{ri} =\bm{e}_1$ are the same eliminators that reduce the shifted matrix
$(\widetilde{H}_{N}^{[1]}-\tilde{z}I_{N})$ to the upper triangular matrix
\[
(\widetilde{H}_{N}^{[1]}-\tilde{z}I_{N})\mathcal{L}_{2}^{-1}
=
\begin{bmatrix}
0 & \times & \times &        &        \\
  & \times & \times & \ddots &        \\
  &         & \ddots & \ddots & \times \\
  &         &        & \times & \times \\
  &         &        &        & \times
\end{bmatrix}=\mathcal{R}_{2},
\]
with the zero in position $(1,1)$. Moreover, since $\mathcal L_2 \bm{\dot{x}}_{ri}=\bm e_1$, the first column of \(\mathcal L_2^{-1}\) coincides with the right eigenvector associated with \(\tilde z\). As a result, a similarity transformation with $\mathcal{L}_{2}$ yields
\begin{equation}\label{properH}
\widetilde{H}_{N}^{[2]} =\mathcal{L}_{2} \widetilde{H}_{N}^{[1]} \mathcal{L}_{2}^{-1} \;=\;
\left[ \begin{array}{c|cccccc} \tilde{z}& & & & & & \\ \hline & \times & \times & \times & & & \\ & \times & \times & \times & \ddots & & \\ & & \ddots & \ddots & \ddots & \times & \\ & & & \times & \times & \times & \times \\ & & & & \times & \times & \times \\ & & & & & \times & \times \end{array} \right]=\mathcal{L}_{2}\mathcal{R}_{2}+\tilde{z}I_{N}.
\end{equation}
This completes the isolation of the eigenvalue \(\tilde z\), which is now fully separated in the leading \(1\times1\) block and therefore ready for deflation.
\subsection{Effect of the RL steps on the biorthonormal bases}\label{subdown}
Having isolated the eigenvalue \(\tilde z\), we now examine the effect of the perfect RL steps on the biorthonormal pair \((V_N,W_N)\). The structured similarity transformations constructed above naturally induce transformations of the associated basis matrices. From these transformed bases, we extract the reduced biorthonormal pair \((\widetilde V_{N-1},\widetilde W_{N-1})\) satisfying \ref{cond:222}. 

Combining the previous constructions with the eigenvalue decomposition of the diagonalizable matrix $H_{N}$, we arrive at
\begin{eqnarray}\label{dd33}
	\widetilde{H}_{N}^{[2]}=\mathcal{L}_{2} \mathcal{R}_{1}^{-1} \, H_{N} \, \mathcal{R}_{1} \mathcal{L}_{2}^{-1}=\mathcal{L}_{2} \mathcal{R}_{1}^{-1}\, W_{N}^{\top} \, Z \, V_{N}\, \mathcal{R}_{1} \mathcal{L}_{2}^{-1}.
\end{eqnarray}
We have that the first row of $\mathcal{R}_{1}^{-1}$, denoted by \(\bm r_1\), is equal to the \(j\)-th row \(\bm v_j\) of \(V_N\), for some $j$, which is a left eigenvector of $H_{N}$ associated with $\tilde{z}$. Because of the biorthogonality of the left and right eigenvectors, we obtain $\bm{r}_{1}\bm{w}_{i}^{\top} = 0$ for $i \neq j$ and $\bm{r}_{1}\bm{w}_{j}^{\top} = 1$. Partitioning \(\mathcal R_1^{-1}\) as
\[
\mathcal R_1^{-1}
=
\begin{bmatrix}
\bm r_1\\
R_{N-1}
\end{bmatrix},
\qquad
R_{N-1}\in\mathbb R^{(N-1)\times N},
\] we obtain
\begin{equation*}
\mathcal{R}_{1}^{-1}\, W_{N}^{\top}
= 
\begin{bmatrix}
\bm{r}_{1}\\ R_{N-1}
\end{bmatrix}
\begin{bmatrix}
\bm{w}_{i}^{\top}
\end{bmatrix}_{i=1}^{N}
=
\begin{bmatrix}
\text{\textemdash}& \bm{e}_{j}^{\top}& \text{\textemdash}\\
 & \bm{\times} & \\
\end{bmatrix},
\end{equation*}
where $\bm{w}_i$ denotes the $i$-th row of $W_{N}$, and $\bm{\times} \in \mathbb{R}^{(N-1) \times N}$ collects arbitrary, nonessential entries. Based on $\mathcal{R}_{1}^{-1}\mathcal{R}_{1}=I_{N}$, together with $\bm{r}_{1}=\bm{v}_{j}$, we obtain $\bm{v}_{j} \mathcal{R}_{1} \bm{e}_{1} = 1$ and $\bm{v}_{j}\mathcal{R}_{1} \bm{e}_{i}=0$ for $i \ne 1$, which yields
\[
V_{N}\, \mathcal{R}_{1}
=
\begin{bmatrix}
\bm{w}_{i}^{\top}
\end{bmatrix}_{i=1}^{N}
\mathcal{R}_{1}
=
\begin{bmatrix}
 & \bm{\times} & \\
\text{\textemdash} & \bm{e}_{1}^{\top} & \text{\textemdash} \\
 & \bm{\times} & \\
\end{bmatrix}.
\]

Writing the inverse matrix \(\mathcal L_2^{-1}\) in block form as
\[
\mathcal L_2^{-1}
=
\begin{bmatrix}
\bm \ell_1 & L_{N-1}
\end{bmatrix},
\qquad
L_{N-1}\in\mathbb R^{N\times (N-1)},
\] the first column of \(\mathcal L_2^{-1}\), denoted by \(\bm \ell_1\), coincides with the \(j\)-th right eigenvector \(\bm w_j^{\top}\) associated with \(\tilde z\). By biorthogonality, $\bm{e}_{j}^{\top} (V_{N} \mathcal{R}_{1})\bm{\ell}_{1} = 1$ and $\bm{e}_{j}^{\top} (V_{N} \mathcal{R}_{1}) L_{N-1}=\bm{0}$. Consequently,
\[
V_{N}\, \mathcal{R}_{1}\mathcal L_2^{-1}
=
\begin{bmatrix}
 & \bm{\times} & \\
\text{\textemdash} & \bm{e}_{1}^{\top} & \text{\textemdash} \\
 & \bm{\times} & \\
\end{bmatrix}
\begin{bmatrix}
\bm{\ell}_{1} & L_{N-1}
\end{bmatrix}
=
\begin{bmatrix}
\mid & \bm{\times}  \\
\bm{e}_{j}& \bm{0}  \\
 \mid& \bm{\times}  \\
\end{bmatrix},
\]
where $\bm{0}$ denotes the zero row vector of length $N-1$. Using the relation $\mathcal{L}_{2}\mathcal{L}_{2}^{-1}=I_{N}$, together with $\bm{\ell}_{1}=\bm{v}_{j}$, we obtain $\bm{e}_{i}^{\top} \mathcal{L}_{2}(\mathcal{R}_{1}^{-1}\, W_{N}^{\top})\bm{e}_{j}= 0$ for $i \neq j$ and $\bm{e}_{j}^{\top} \mathcal{L}_{2}(\mathcal{R}_{1}^{-1}\, W_{N}^{\top})\bm{e}_{j}= 1$, which yields
\[
\mathcal{L}_{2}(\mathcal{R}_{1}^{-1}\, W_{N}^{\top})=\mathcal{L}_{2}\begin{bmatrix}
\text{\textemdash}& \bm{e}_{j}^{\top}& \text{\textemdash}\\
 & \bm{\times} & \\
\end{bmatrix}=\begin{bmatrix}
\text{\textemdash}& \bm{e}_{j}^{\top}& \text{\textemdash}\\
\bm{\times} & \bm{0} & \bm{\times}\\
\end{bmatrix},
\]
where \(\bm 0\) denotes the zero column vector of length \(N-1\). 

In view of \eqref{dd33}, we define the transformed biorthonormal pair
\begin{equation*}
V_{N}^{[1]}=V_{N}\mathcal{R}_{1}\mathcal{L}_{2}^{-1},
\qquad
W_{N}^{[1]}=W_{N}\mathcal{R}_{1}^{-\top}\mathcal{L}_{2}^{\top}.
\end{equation*}

The structural properties established above will now be used to construct the reduced biorthonormal pair satisfying condition~\ref{cond:222}.
\subsection{Verification of the basis conditions}
It remains to verify condition~\ref{cond:111}. To this end, we explicitly investigate the effect of the transformations on the first column of \(V_N\) and the first and second columns of \(W_N\). As the pair of biorthonormal matrices 
\( V_{N}, W_{N} \) satisfies \ref{cond:1}, we have that 
\[
\mathcal{X} \mathcal{R}
	= W_{N} \begin{bmatrix}
		\mid & \mid \\
		\bm{e}_1 & \bm{e}_2 \\
		\mid & \mid
	\end{bmatrix},\ \quad \text{and} \quad
	\begin{bmatrix}
		\mid \\
		\bm{v}_1 \\
		\mid 
	\end{bmatrix}
	= V_{N} \begin{bmatrix}
		\mid \\
		\bm{e}_1 \\
		\mid 
	\end{bmatrix},
\]
or equivalently
\begin{equation}\label{first}
W_{N}^{-1}\mathcal{X} 
	=  \begin{bmatrix}
		\mid & \mid \\
		\bm{e}_1 & \bm{e}_2 \\
		\mid & \mid
	\end{bmatrix}\mathcal{R}^{-1},\ \quad \text{and} \quad
	V_{N}^{-1}\begin{bmatrix}
		\mid \\
		\bm{v}_1 \\
		\mid 
	\end{bmatrix}
	=  \begin{bmatrix}
		\mid \\
		\bm{e}_1 \\
		\mid 
	\end{bmatrix}.
\end{equation}
Applying the transformations defining \(V_N^{[1]}\) and \(W_N^{[1]}\) to the relations in~\eqref{first}, we obtain
\begin{equation*}
\mathcal{L}_{2}^{-\top}\mathcal{R}_{1}^{\top}W_{N}^{-1}\mathcal{X} 
	=  \mathcal{L}_{2}^{-\top}\mathcal{R}_{1}^{\top}\begin{bmatrix}
		\mid & \mid \\
		\bm{e}_1 & \bm{e}_2 \\
		\mid & \mid
	\end{bmatrix}\mathcal{R}^{-1},\ \quad  \quad
	\mathcal{L}_{2}\mathcal{R}_{1}^{-1}V_{N}^{-1}\begin{bmatrix}
		\mid \\
		\bm{v}_1 \\
		\mid 
	\end{bmatrix}
	=  \mathcal{L}_{2}\mathcal{R}_{1}^{-1}\begin{bmatrix}
		\mid \\
		\bm{e}_1 \\
		\mid 
	\end{bmatrix}.
\end{equation*}
 The structure of \(\mathcal R_1\) and \(\mathcal L_2\) now allows us to characterize
 \begin{equation*}
\left(W_N^{[1]}\right)^{-1}\mathcal{X} 
	= \underbrace{\left[
\begin{array}{cc}
\times & \times \\ \hline
\times & \times \\
\otimes & \times \\
0 & \otimes \\
\bm 0 & \bm 0
\end{array}
\right]}_{=\bm {M}}\mathcal{R}^{-1},\ \quad  \quad
	\left(V_N^{[1]}\right)^{-1}\begin{bmatrix}
		\mid \\
		\bm{v}_1 \\
		\mid 
	\end{bmatrix}
	=\underbrace{\left[
\begin{array}{c}
\times \\ \hline
\times \\
\bm 0
\end{array}
\right]}_{=\bm{m}},
\end{equation*}
where \(\bm 0\) denotes a zero vector of appropriate dimension. The first row of \(\bm M\) and the first entry of \(\bm m\) are separated by a horizontal line, since these components correspond to the deflated part of the problem and will eventually be removed. Moreover, the matrix \(\bm M\) contains two distinguished nonzero entries, indicated by \(\otimes\). These entries must be eliminated in order to recover, after deflation, the desired structure $\begin{bmatrix}
\mid & \mid \\
\bm e_1 & \bm e_2 \\
\mid & \mid
\end{bmatrix}
\mathcal R_{\mathrm{down}}^{-1}$, where \(\bm e_1,\bm e_2\in\mathbb R^{N-1}\). To eliminate these two entries, we apply the lower triangular matrix \(\mathcal L_3=G_3G_2\), defined as the product of the lower triangular eliminators \(G_2\) and \(G_3\) from \eqref{elim}, resulting in
 \begin{equation}\label{Weights}
\mathcal{L}_{3}\left(W_N^{[1]}\right)^{-1}\mathcal{X} 
	= \left[
\begin{array}{cc}
\times & \times \\ \hline
\times & \times \\
0 & \times \\
\bm 0 & \bm 0
\end{array}
\right]\mathcal{R}^{-1},\ \quad  \quad
	\mathcal{L}_{3}^{-\top}\left(V_N^{[1]}\right)^{-1}\begin{bmatrix}
		\mid \\
		\bm{v}_1 \\
		\mid 
	\end{bmatrix}
	= \left[
\begin{array}{c}
\times \\ \hline
\times \\
\bm 0
\end{array}
\right].
\end{equation}
This additional transformation induces a new biorthonormal pair and the corresponding transformed Hessenberg matrix,
\[
V_N^{[2]} = V_N^{[1]}\mathcal L_3^{\top},
\qquad
W_N^{[2]} = W_N^{[1]}\mathcal L_3^{-1},
\qquad
H_N^{[3]}
=
\mathcal{L}_3^{-\top} H_N^{[2]} \mathcal L_3^{\top}.
\]
However, the matrix \(H_N^{[3]}\) now takes the form
\[
\left[
\begin{array}{c|cccccccc}
\tilde z &        &        &        &        &        &        &        \\ \hline
         & \times & \times & \times & \otimes &        &        &        \\
         & \times & \times & \times & \times  & \otimes &        &        \\
         &        & \times & \times & \times  & \times  &        &        \\
         &        &        & \ddots & \ddots  & \ddots  & \ddots &        \\
         &        &        &        & \times  & \times  & \times & \times \\
         &        &        &        &         & \times  & \times & \times \\
         &        &        &        &         &         & \times & \times
\end{array}
\right],
\]
where the entries marked by \(\otimes\), located at positions \((2,5)\) and \((3,6)\), must be chased further in order to recover the desired structure~\eqref{properH}. To restore the \(4\)-banded Hessenberg structure in the trailing block, a sequence of upper triangular eliminators is constructed to eliminate the bulges. Eliminating these entries generates new unwanted elements on the third superdiagonal of \(H_N^{[3]}\), which are subsequently chased downward along the matrix. In general, this requires \(N-4\) upper triangular eliminators to remove all entries lying above the second superdiagonal, thereby recovering the desired structure without altering the first column of \(V_N^{[2]}\) or the first two columns of \(W_N^{[2]}\). Since this bulge-chasing procedure is analogous to the one used in the updating process and in Faghih et al.~\cite[Section~5]{Faghih2025KrylovCore}, we omit the technical details. Denoting the corresponding eliminators by \(P_s\), \(s=1,2,\ldots,N-4\), we define the upper triangular matrix $
\mathcal R_2 = P_1P_2\cdots P_{N-4}$. Under this transformation, the biorthonormal bases and the associated Hessenberg matrix are updated as
\[
V_N^{[3]} = V_N^{[2]}\mathcal R_2,
\qquad
W_N^{[3]} = W_N^{[2]}\mathcal R_2^{-\top},
\qquad
H_N^{[4]}
=
\mathcal R_2^{-1} H_N^{[3]} \mathcal R_2=\begin{bmatrix}
\tilde z & \\
& \widetilde H_{N-1}
\end{bmatrix} .
\]
The biorthonormal matrices \(V_N^{[3]}\) and \(W_N^{[3]}\) possess the same structural properties as \(V_N^{[1]}\) and \(W_N^{[1]}\) derived in Subsection~\ref{subdown}. Introducing the permutation matrix \(P\), which interchanges rows \(1\) and \(j\), therefore yields
\[
  (PW_N^{[3]})^{\top}\,
  (P Z P^{\top})\,
  (PV_N^{[3]} )
  =
\begin{bmatrix}
1 &\\
& \widetilde{W}_{N-1}^{\top}
\end{bmatrix}
\begin{bmatrix}
\tilde{z} & \\
& \widetilde{Z}
\end{bmatrix}
\begin{bmatrix}
1 &\\
& \widetilde{V}_{N-1}
\end{bmatrix}
=
\begin{bmatrix}
\tilde{z} & \\
& \widetilde{H}_{N-1}
\end{bmatrix}.
\]
The matrices \(\widetilde{W}_{N-1}\), \(\widetilde{Z}\), \(\widetilde{V}_{N-1}\), and \(\widetilde{H}_{N-1}\) are therefore the downdated factors obtained after deflating the isolated eigenvalue \(\tilde z\). Consequently, this provides a solution to Problem~\ref{Downdate:IEP}, satisfying conditions~\ref{cond:222} and~\ref{cond:333}. The total structured similarity transformation in \eqref{dd} is therefore given by
\[
\mathcal S
=
\mathcal R_1\mathcal L_2^{-1}\mathcal L_3^{\top}\mathcal R_2 .
\]
Applying the permutation matrix \(P\) and the transformation \(\mathcal R_2\) to \eqref{Weights} yields
 \begin{equation*}
\left(PW_N^{[3]}\right)^{-1}(P\mathcal{X}) 
	= \left[
\begin{array}{cc}
\times & \times \\ \hline
\times & \times \\
0 & \times \\
\bm 0 & \bm 0
\end{array}
\right]\mathcal{R}^{-1},\ \quad  \quad
	\left(PV_N^{[3]}\right)^{-1}P\begin{bmatrix}
		\mid \\
		\bm{v}_1 \\
		\mid 
	\end{bmatrix}
	= \left[
\begin{array}{c}
\times \\ \hline
\times \\
\bm 0
\end{array}
\right].
\end{equation*}
The permutation matrix \(P\) moves the entries \(\alpha_{1,j}\) and \(\alpha_{2,j}\) to the leading positions of \(\mathcal X\), thereby preparing them for deflation. After removing the first row and column corresponding to the isolated eigenvalue, the right-hand sides reduce to
$\begin{bmatrix}
\mid & \mid \\
\bm e_1 & \bm e_2 \\
\mid & \mid
\end{bmatrix}
\mathcal R_{\mathrm{down}}^{-1}$, and $\bm e_1$, respectively, 
with \(\bm e_1,\bm e_2\in\mathbb R^{N-1}\). Hence, condition~\ref{cond:111} is satisfied. This follows from the uniqueness of the solution established in \cite[Proposition~4.2]{Faghih2025KrylovCore}.

Finally, the scaling strategy described in Section~\ref{Scaling} is applied to 
$\widetilde{W}_{N-1}$, $\widetilde{V}_{N-1}$, and $\widetilde{H}_{N-1}$, 
producing the scaled solution 
$\widetilde{W}_{N-1}^{S}$, $\widetilde{V}_{N-1}^{S}$, and $\widetilde{H}_{N-1}^{S}$ 
for the downdated IEP~\ref{Downdate:IEP}.
\section{Numerical experiments}\label{SSec5}
In this section, we present some numerical experiments to assess the performance of the updating and downdating algorithms. All computations were performed in MATLAB, and the corresponding code is publicly available.\footnote{\url{https://wms.cs.kuleuven.be/groups/NUMA/software/momentum-software}}

Assume that the Hessenberg matrix $\widetilde{H}$, together with a pair of
biorthonormal matrices $\widetilde{V}, \widetilde{W}$, are obtained as the
solution of Problem~\ref{Update:IEP} or Problem~\ref{Downdate:IEP} via the
updating and downdating procedures. The dimensions of the matrices $\widetilde{H}$, $\widetilde{V}$, and $\widetilde{W}$ depend on whether an updating or a downdating procedure is applied. For notational simplicity, matrix dimensions are omitted in the following definitions and will be specified explicitly in the numerical examples. The following metrics are used for the errors of the computed solutions.
\begin{itemize}
\item The accuracy of the recurrence relation, consisting of the recurrence
matrix $\widetilde{H}$ and biorthonormal basis $(\widetilde{V},\widetilde{W})$, is measured by
\begin{equation*}
E_r=\dfrac{\big\Vert \widetilde{W}^{\top} \widetilde{Z} \widetilde{V}-\widetilde{H}\big\Vert_{2}}{\max \big(\big\Vert \widetilde{W}^{\top} \widetilde{Z} \widetilde{V}\big\Vert_{2},\big\Vert \widetilde{H}\big\Vert_{2}\big)},
\end{equation*}
where $\widetilde{Z}$ denotes the appropriately redefined matrix of nodes, i.e., $Z$ where the downdated nodes are omitted or the updated nodes are added.\\ 

\item The bi-orthonormality of the formed basis $(\widetilde{V}, \widetilde{W})$ is measured by
	\begin{equation*}
	E_{bo} = \big\Vert \widetilde{W}^{\top}\widetilde{V} - I\big\Vert_{2}.
	\end{equation*}
\end{itemize}

\subsection{Updating}
As the first and second examples, we consider two families of discrete multiple orthogonal polynomials, the Kravchuk and Hahn MOPs \cite{MR1985676}.

\begin{example}[Multiple Kravchuk polynomials \cite{MR1985676}]
The Kravchuk polynomials are associated with discrete measures defined by binomial
distributions supported on the integers. More precisely, we consider
\[
\omega_j = \sum_{i=0}^{N-1} \binom{N-1}{i}\, p_j^{\,i}(1-p_j)^{N-1-i}\,\delta_i, \qquad j=1,2,
\]
where \(0 < p_j < 1\) are distinct. Throughout
our numerical experiments, we fix \(p_1 = 0.4\) and \(p_2 = 0.5\).

Explicit formula for the type~II multiple Kravchuk polynomials
\(K_{n_1,n_2}^{p_1,p_2,N}(x)\) for a multi-index \((n_1,n_2)\), is given by \cite[Section~4.4]{MR1985676}
\begin{multline*}
K_{n_1,n_2}^{p_1,p_2,N}(x)
= p_1^{n_1} p_2^{n_2} (-N+1)_{n_1+n_2}
\sum_{j=0}^{n_1+n_2} \sum_{k=0}^{j}
\frac{(-n_1)_k}{k!}
\left(\frac{1}{p_1}\right)^k
\frac{(-n_2)_{j-k}}{(j-k)!}
\left(\frac{1}{p_2}\right)^{j-k}
\frac{(-x)_j}{(-N+1)_j}.
\end{multline*}
Here, \((c)_j\) denotes the Pochhammer function, defined by
\((c)_j = \prod_{i=0}^{j-1} (c+i)\) for \(j>0\), with \((c)_0 = 1\).
\end{example}
\begin{example}[Multiple Hahn polynomials \cite{MR1985676}]
Hahn polynomials arise when the discrete measures form a
hypergeometric distribution on the integers. More precisely, we consider
\[
\omega_j = \sum_{i=0}^{N-1}
\frac{(\beta_j+1)_i}{i!}
\frac{(\gamma+1)_{N-i-1}}{(N-1-i)!}
\,\delta_i, \qquad j=1,2,
\]
where the parameters satisfy \(\beta_j > -1\) and \(\gamma > -1\), and the values
\(\beta_1\) and \(\beta_2\) are distinct. In all numerical experiments, we fix
\(\beta_1 = 1\), \(\beta_2 = 1.5\), and \(\gamma = 1\).
For this AT-system, explicit formulae for the corresponding multiple Hahn
polynomials are available; see~\cite[Section~4.4]{MR1985676}.
\end{example}

In the case of Kravchuk and Hahn polynomials the weight vectors are not nested, which prevents incremental updating. Therefore, when increasing $N$, the computation cannot start from the solution of size $N-1$. Instead, for each $N$ the algorithm is restarted from the basic solution $H_{2}$ and $V_{2}, W_{2}$ to problem \ref{Prob:IEP} of size $2 \times 2$\footnote{The basic \(2\times 2\) solution is computed using the core transformation method developed in~\cite{Faghih2025KrylovCore}.}, 
and the full procedure is applied to compute the recurrence matrix 
and the bi-orthonormal pairs for the problem of size $N \times N$. We note that, in both cases, $P_{0}(z_{i})=1$. 

In Figure \ref{fig:Hahn_Krav}, we show the relative error $E_r$ and the loss of biorthogonality $E_{bo}$, over different numbers of nodes $N$, for the Kravchuk and Hahn MOPs.

This problem was also studied by Faghih et al.~\cite{Faghih2025KrylovCore} using core transformation and bi-orthogonal Lanczos methods.
In that work, the monicness condition is imposed on the type II MOPs, requiring $P_0(x)=1$ and, consequently, all subdiagonal entries of the recurrence matrix to be equal to one; see~\cite[Section~5]{Faghih2025KrylovCore}. In the present work, we do not impose the monicness condition, thereby allowing the subdiagonal entries of the recurrence matrix to differ from one, and instead employ a scaling strategy for the computation of the recurrence matrix and the associated bi-orthonormal matrix pairs.

In the present work, we solve the same problem using the proposed updating procedure in two settings: first, without the scaling strategy and with the subdiagonal entries constrained to be one, corresponding to the monic setting considered in~\cite{Faghih2025KrylovCore} and referred to as \emph{monic}; and second, with the scaling strategy introduced in Section~\ref{Scaling}, referred to as \emph{scaled}. The numerical comparison between these two settings is presented in Figure~\ref{fig:Hahn_Krav}, where the corresponding results are explicitly labeled.
\begin{figure}[h]
   \begin{subfigure}[b]{0.45\textwidth}
        \centering
		\setlength\figureheight{8cm}
		\setlength\figurewidth{13cm}
        \input{exm1}
    \end{subfigure}
    \hspace{0.05\textwidth}
    \caption{Error metrics for increasing the size of the sequence of Kravchuk (left) and Hahn (right) MOPs.}\label{fig:Hahn_Krav}
    \end{figure}

From these results, several observations can be made. The generation of Kravchuk and Hahn MOPs is known to be highly ill-conditioned, as analyzed in~\cite[Section~6]{Faghih2025KrylovCore}. This behavior is clearly reflected in Figure~\ref{fig:Hahn_Krav}, where essentially all decimal digits of accuracy are lost for values of $N$ between $20$ and $25$. 
When the monic setting of~\cite{Faghih2025KrylovCore} is employed, without applying the scaling strategy proposed in this work, both the recurrence error $E_r$ (monic) and the loss of biorthogonality $E_{bo}$ (monic) grow rapidly with increasing $N$, exhibiting an essentially exponential deterioration.

In contrast, the new scaling strategy introduced in Section~\ref{Scaling} leads to a significant improvement in numerical stability. 
For both Kravchuk and Hahn MOPs, the errors remain several orders of magnitude smaller and show only mild growth as $N$ increases. 
Even for $N = 100$, approximately five digits of accuracy are retained in the recurrence relation, and the bi-orthonormality error is relatively below $10^{-1}$. This demonstrates that the proposed scaling significantly mitigates the ill-conditioning of the problem and enables the stable computation of large sequences of MOPs. However, we note that the observed difference between the results arises from both the proposed scaling strategy and the nature of the updating procedure. In the updating approach, the nodes are added sequentially, starting from a smaller-size problem and incrementally building up to size $N$, while reusing the previously computed solution at each step. This, in fact, provides better control over the matrices computed from size $2 \times 2$ up to $N \times N$. 
In contrast, in the bi-orthogonal Lanczos and core transformation methods \cite{Faghih2025KrylovCore}, the full problem of size $N \times N$ is solved all at once.

To investigate the loss of biorthogonality observed in the computed pair of bases, we consider the following numerical experiment. First, we compute the approximate recurrence matrix $\widetilde{H}_{N}$. Using the recurrence coefficients contained in $\widetilde{H}_{N}$ together with the exact initial vectors $\bm{v}_{1}$, $\bm{w}_{1}$, and $\bm{w}_{2}$ obtained through Proposition~\ref{prop:initialvectors}, we regenerate the matrices $V_{N}$ and $W_{N}$ using the recurrence relation, for which ideally $W_{N}^{\top}\widetilde{Z}V_{N}=\widetilde{H}_{N}$.

In order to determine whether the loss of biorthogonality originates from inaccuracies in the computed recurrence coefficients or from the recurrence process itself, we repeat the same procedure using the exact recurrence matrix $H_{N}$ for both Kravchuk and Hahn polynomials \cite{MR1985676}. Starting again from the exact initial conditions, we generate the corresponding pair of biorthogonal matrices $V_{N}$ and $W_{N}$ using the recurrence relation.

For both cases, we measure the biorthogonality of the computed bases, and the results are shown in Figure~\ref{fig:Hahn_Krav_BO}.

\begin{figure}[h]
   \begin{subfigure}[b]{0.45\textwidth}
        \centering
		\setlength\figureheight{8cm}
		\setlength\figurewidth{13cm}
%
\begin{tikzpicture}
\begin{axis}[%
width=6.2cm,
height=5.7cm,
at={(0cm,0cm)},
scale only axis,
xmin=2,
xmax=25,
xlabel style={font=\color{white!15!black}},
xlabel={$N$},
ymode=log,
ymin=1e-17,
ymax=10000000,
yminorticks=true,
axis background/.style={fill=white},
legend style={at={(0.97,0.03)}, anchor=south east, legend cell align=left, align=left, draw=white!15!black},
grid=major,
ticklabel style={font=\small},
label style={font=\small},
legend style={font=\scriptsize}
]
\addplot [color=blue, mark=asterisk, mark options={solid, fill=blue, blue}]
  table[row sep=crcr]{%
2	5.55111512312578e-17\\
3	2.39151168851531e-15\\
4	2.61388719397042e-14\\
5	6.37379244929381e-13\\
6	3.42510562899387e-12\\
7	5.3515222154564e-11\\
8	2.34961266470656e-10\\
9	7.02490884518894e-09\\
10	7.46408952098831e-08\\
11	2.60119319185451e-07\\
12	1.33638805956327e-05\\
13	4.08223131612404e-05\\
14	0.00107744536122074\\
15	0.00683451718118819\\
16	0.231258544701383\\
17	1.57254135591034\\
18	4.0156336985677\\
19	4.70587850196754\\
20	209.593345655283\\
21	108.488556203515\\
22	232.698521837511\\
23	312.078019369324\\
24	432.745041635493\\
25	338.012015496364\\
};
\addlegendentry{$E_{bo}$ ($\widetilde
{H}_{N}$)}

\addplot [color=red, mark=o, mark options={solid, red}]
  table[row sep=crcr]{%
2	0\\
3	1.85240653041116e-15\\
4	5.60016599667868e-14\\
5	3.12110036770384e-13\\
6	1.85031651008355e-12\\
7	2.64191841859968e-11\\
8	4.03732291644002e-11\\
9	8.8248995900118e-10\\
10	1.11434270627852e-08\\
11	1.52637383535497e-07\\
12	1.63347655355117e-06\\
13	9.12993127266349e-06\\
14	8.54440613379883e-05\\
15	0.000625935568670824\\
16	0.00474578244687727\\
17	0.100863085522701\\
18	1.04324901143283\\
19	2.27567236816284\\
20	95.3780372196583\\
21	1239.02082171435\\
22	12608.5370007029\\
23	85082.2734272641\\
24	824402.947198811\\
25	4119740.14889842\\
};
\addlegendentry{$E_{bo}$ ($H_{N}$)}

\end{axis}

\begin{axis}[%
width=6.2cm,
height=5.7cm,
at={(7.5cm,0cm)},
scale only axis,
xmin=2,
xmax=25,
xlabel style={font=\color{white!15!black}},
xlabel={$N$},
ymode=log,
ymin=1e-15,
ymax=1000000000,
yminorticks=true,
axis background/.style={fill=white},
legend style={at={(0.97,0.03)}, anchor=south east, legend cell align=left, align=left, draw=white!15!black},
grid=major,
ticklabel style={font=\small},
label style={font=\small},
legend style={font=\scriptsize}
]
\addplot [color=blue, mark=asterisk, mark options={solid, fill=blue, blue}]
  table[row sep=crcr]{%
2	0\\
3	9.21623154278725e-15\\
4	1.74466885884829e-13\\
5	3.67618484189278e-12\\
6	3.00873923379506e-11\\
7	5.40522767187642e-10\\
8	6.54357568330277e-09\\
9	3.94163613207289e-08\\
10	8.12271299787527e-07\\
11	1.33522812939227e-05\\
12	0.000131985033980042\\
13	0.00167346146655114\\
14	0.0194511203449801\\
15	0.239146313971288\\
16	4.25418428319422\\
17	12.14973723091\\
18	24.1295145910739\\
19	15.7872761743067\\
20	54.1597779880772\\
21	50.0425587823182\\
22	437.502392054832\\
23	102.270321931712\\
24	97.4601744871167\\
25	285.466337716606\\
};
\addlegendentry{$E_{bo}$ ($\widetilde{H}_{N}$)}

\addplot [color=red, mark=o, mark options={solid, red}]
  table[row sep=crcr]{%
2	0\\
3	8.05889475315074e-15\\
4	2.8602581451513e-13\\
5	1.23675265968682e-12\\
6	9.66628775738493e-12\\
7	1.06602698424214e-10\\
8	2.80730807348697e-09\\
9	9.17108962498768e-09\\
10	1.04548615355092e-07\\
11	1.08658339946208e-06\\
12	2.47479690427857e-05\\
13	0.000135228124925643\\
14	0.00416791510140659\\
15	0.0634827397280964\\
16	0.542752284973321\\
17	6.7658844470053\\
18	65.006295508226\\
19	333.422981314982\\
20	2886.72562009191\\
21	204919.664130825\\
22	1401492.46119856\\
23	11983428.6152984\\
24	248552114.905844\\
25	983935442.257661\\
};
\addlegendentry{$E_{bo}$ ($H_{N}$)}

\end{axis}
\end{tikzpicture}%
    \end{subfigure}
    \hspace{0.05\textwidth}
    \caption{Loss of bi-orthogonality of the bases generated using the approximate recurrence matrix $\widetilde{H}_{N}$ and the exact recurrence matrix $H_{N}$ for the sequence of Kravchuk (left) and Hahn (right) MOPs.}\label{fig:Hahn_Krav_BO}
    \end{figure}

The results show that even when the exact recurrence matrix $H_{N}$ is used, the bases $V_{N}$ and $W_{N}$ gradually lose biorthogonality as the degree increases. Furthermore, the recurrence coefficients contained in $\widetilde{H}_{N}$ produce $\widetilde{V}_{N}$ and $\widetilde{W}_{N}$ whose biorthogonality is comparable to that obtained using the exact recurrence matrix $H_{N}$. This demonstrates that the computed coefficients in $\widetilde{H}_N$ are sufficiently accurate, and that the observed loss of biorthogonality naturally occurs when generating $V_{N}$ and $W_{N}$ from the recurrence relation.

\begin{example}\label{example3}
We consider Chebyshev nodes and equidistant nodes on the interval $[-1,1]$ associated with the random weights $\bm{\alpha}_j$, $j=1,2$, whose entries are independently and uniformly distributed on $(0,1)$, and a random $P_{0}$.
\end{example}  
For MOP systems with nested weights, it is possible to add nodes one by one and update the corresponding recurrence matrices and bi-orthonormal pairs incrementally. Starting from the solution ${H}_2$, $({V}_2, {W}_2)$ of size $2\times 2$, $\ell$ nodes can be added sequentially, yielding after each step the updated recurrence matrices $\widetilde{H}_3, \widetilde{H}_4, \ldots, \widetilde{H}_{2+\ell}$ and bi-orthonormal matrices $(\widetilde{V}_3, \widetilde{W}_3), (\widetilde{V}_4, \widetilde{W}_4), \ldots, (\widetilde{V}_{2+\ell}, \widetilde{W}_{2+\ell})$. For this example, we run the updating procedure starting from the solution of size \(2\times 2\), and subsequently add \(\ell = 98\) nodes. The error metrics introduced above are tracked and reported in Figure~\ref{fig:Chebyshev}. Since the weights are random, all reported results are averaged over $5$ independent runs.
\begin{figure}[!htbp]
   \begin{subfigure}[b]{0.45\textwidth}
        \centering
		\setlength\figureheight{8cm}
		\setlength\figurewidth{13cm}
        \input{exm3}
    \end{subfigure}
    \hspace{0.05\textwidth}
    \caption{Error metrics for increasing the size of the sequence of MOPs, for Chebyshev nodes (left) and equidistant nodes on $[-1,1]$ (right), with random weights, averaged over $5$ runs.}\label{fig:Chebyshev}
    \end{figure}

As shown in Figure~\ref{fig:Chebyshev}, the errors associated with the Chebyshev nodes grow more slowly than those corresponding to the equidistant nodes. In both cases, however, the recurrence relation error and the biorthogonality error remain relatively small, even for large values of $N$. For example, when $N = 100$, the recurrence relation retains approximately five digits of accuracy for the Chebyshev nodes, compared to about three digits for the equidistant nodes.

Although the biorthogonality error increases more rapidly than the recurrence relation error $E_r$, its magnitude remains controlled throughout the experiments.
\subsection{Downdating}
As a first test for the downdating procedure, we consider the setting of Example~\ref{example3}. We first compute the solution ${H}_N$ together with the bi-orthonormal matrices $({V}_N,{W}_N)$ of size $N\times N$ using the updating procedure. Starting from this solution, we then sequentially downdate $\ell$ nodes, obtaining after each step the recurrence matrices $\widetilde{H}_{N-1}, \widetilde{H}_{N-2}, \ldots, \widetilde{H}_{N-\ell}$ and the associated bi-orthonormal matrices $(\widetilde{V}_{N-1}, \widetilde{W}_{N-1}), (\widetilde{V}_{N-2}, \widetilde{W}_{N-2}), \ldots, (\widetilde{V}_{N-\ell}, \widetilde{W}_{N-\ell})$. In this experiment, we take $N=40$ and remove $\ell=20$ nodes successively. The error metrics introduced above are monitored after each downdating step and reported in Figure~\ref{fig:Chebyshev_Downdating}. Since the weights are random, the reported results are averaged over $5$ independent runs.
\begin{figure}[H]
   \begin{subfigure}[b]{0.45\textwidth}
        \centering
		\setlength\figureheight{8cm}
		\setlength\figurewidth{13cm}
%
\begin{tikzpicture}

\begin{axis}[%
width=6.2cm,
height=5.7cm,
at={(0cm,0cm)},
scale only axis,
xmin=20,
xmax=40,
x dir=reverse,
xlabel style={font=\color{white!15!black}},
xlabel={$N$},
ymode=log,
ymin=1e-11,
ymax=1e-08,
yminorticks=true,
axis background/.style={fill=white},
xmajorgrids,
ymajorgrids,
yminorgrids,
legend style={
    at={(0.03,0.97)},
    anchor=north west,
    legend cell align=left,
    font=\scriptsize,
    draw=white!15!black
},
grid=major,
ticklabel style={font=\small},
label style={font=\small},
]
\addplot [color=blue, mark=asterisk, mark options={solid, blue}]
  table[row sep=crcr]{%
39	2.55771806081152e-10\\
38	7.93858342127728e-10\\
37	6.46760754472561e-10\\
36	1.169186669839e-09\\
35	2.76582938784314e-10\\
34	2.92304748610525e-10\\
33	9.37166608062324e-10\\
32	5.53742337103853e-11\\
31	2.13061396370267e-10\\
30	1.32107849957588e-10\\
29	1.87425778663784e-11\\
28	5.38833405628622e-11\\
27	7.95228844155525e-11\\
26	1.83477133687243e-10\\
25	1.79103472301396e-10\\
24	1.08630518327019e-10\\
23	2.32057394341035e-10\\
22	9.51059808144307e-10\\
21	1.15275806114458e-09\\
20	3.48667336199498e-09\\
};
\addlegendentry{$ E_r$ (scaled)}

\addplot [color=red, mark=o, mark options={solid, red}]
  table[row sep=crcr]{%
39	3.38141220224825e-10\\
38	1.93944451161938e-10\\
37	7.57976254353771e-10\\
36	7.52359562462699e-10\\
35	1.26723617644773e-10\\
34	1.52914899849558e-10\\
33	3.0507818468088e-10\\
32	2.92057360546751e-10\\
31	1.89218226239802e-10\\
30	2.37452661721391e-10\\
29	1.03717713612835e-09\\
28	1.39842921868112e-10\\
27	7.78056676637895e-11\\
26	6.81820659319445e-11\\
25	2.75776657851224e-10\\
24	1.18299384016058e-10\\
23	7.71093473632276e-11\\
22	9.81977904233448e-11\\
21	8.11297534639916e-11\\
20	6.53977973304016e-10\\
};
\addlegendentry{$E_{bo}$ (scaled)}

\end{axis}

\begin{axis}[%
width=6.2cm,
height=5.7cm,
at={(7.5cm,0cm)},
scale only axis,
xmin=20,
xmax=40,
x dir=reverse,
xlabel style={font=\color{white!15!black}},
xlabel={$N$},
ymode=log,
ymin=1e-11,
ymax=1e-09,
yminorticks=true,
axis background/.style={fill=white},
xmajorgrids,
ymajorgrids,
yminorgrids,
legend style={
    at={(0.03,0.97)},
    anchor=north west,
    legend cell align=left,
    font=\scriptsize,
    draw=white!15!black
},
grid=major,
ticklabel style={font=\small},
label style={font=\small},
]
\addplot [color=blue, mark=asterisk, mark options={solid, blue}]
  table[row sep=crcr]{%
39	4.24360778071901e-11\\
38	7.43839111479957e-11\\
37	6.5282554996889e-11\\
36	9.6831503165662e-11\\
35	7.43105371479676e-11\\
34	8.54797255599395e-11\\
33	7.04212641796335e-11\\
32	2.63885175581788e-11\\
31	1.1072102002937e-10\\
30	9.28073404644155e-11\\
29	1.73771905346664e-10\\
28	2.76033694273216e-10\\
27	1.83555294250856e-10\\
26	1.97524674662492e-10\\
25	2.10901491727733e-10\\
24	1.66337218257324e-10\\
23	2.01623472817451e-10\\
22	1.7439973185031e-10\\
21	1.91195915805258e-10\\
20	5.34649213805311e-10\\
};
\addlegendentry{$ E_r$ (scaled)}

\addplot [color=red, mark=o, mark options={solid, red}]
  table[row sep=crcr]{%
39	9.74052417408492e-11\\
38	2.50934039541573e-10\\
37	1.18853528401463e-10\\
36	1.10724764082226e-10\\
35	1.74106300586794e-10\\
34	1.53273510179762e-10\\
33	2.26532807269464e-10\\
32	3.20593691032706e-10\\
31	7.21827917173701e-11\\
30	1.52257363790841e-10\\
29	4.47390321981702e-11\\
28	4.44227773862947e-11\\
27	1.90437064544949e-10\\
26	1.51816989388769e-10\\
25	7.85933288463753e-11\\
24	7.74258763766593e-11\\
23	8.3859024295132e-11\\
22	1.60868067148335e-10\\
21	7.09806469338302e-10\\
20	1.1317519616047e-10\\
};
\addlegendentry{$E_{bo}$ (scaled)}

\end{axis}

\end{tikzpicture}%
    \end{subfigure}
    \hspace{0.05\textwidth}
    \caption{Error metrics for decreasing the size of the sequence of MOPs, for Chebyshev nodes (left) and equidistant nodes on $[-1,1]$ (right), with random weights, averaged over $5$ runs.}\label{fig:Chebyshev_Downdating}
    \end{figure}
Figure~\ref{fig:Chebyshev_Downdating} shows that the downdating procedure remains numerically stable under successive node removals. The recurrence and biorthogonality errors stay consistently between approximately $10^{-9}$ and $10^{-11}$ for all matrix sizes considered, with no noticeable error growth throughout the downdating process. We emphasize that the downdating process is initialized from the computed $40\times40$ solution obtained by the updating procedure, for which the recurrence and biorthogonality errors remain sufficiently small, ensuring that the starting solution is numerically reliable.

As a second test for the downdating procedure, we again consider the setting of Example~\ref{example3} and begin with the computed solution of size $40 \times 40$. Starting from this solution, we alternately apply one downdating step followed by one updating step. More precisely, at each cycle the last node is first removed, producing a solution of size $39\times39$, and is then added back through the updating procedure to recover a solution of size $40\times40$. This down/update cycle is repeated $20$ times. The recurrence and biorthogonality errors are recorded after every downdating and updating step and are reported in Figure~\ref{fig:Chebyshev_Downdating2}. Since the weights are random, the reported results are averaged over $5$ independent runs.

\begin{figure}[H]
   \begin{subfigure}[b]{0.45\textwidth}
        \centering
		\setlength\figureheight{8cm}
		\setlength\figurewidth{13cm}
        \input{alternating_down_update}
    \end{subfigure}
    \hspace{0.05\textwidth}
    \caption{Error metrics for the alternating downdating/updating experiment for Chebyshev nodes (left) and equidistant nodes on $[-1,1]$ (right). Marker $*$ corresponds to downdating steps, while $\circ$ corresponds to updating steps.}\label{fig:Chebyshev_Downdating2}
    \end{figure}
    \section{Conclusion and future work}\label{sec:conclusion}
We presented updating and downdating procedures for constructing the step-line recurrence relations of discrete MOPs on the real line by reformulating the problem as a structured Hessenberg inverse eigenvalue problem. The proposed algorithms efficiently modify an existing $(r+2)$-banded upper Hessenberg recurrence matrix when nodes are added to or removed from the underlying discrete measures. Based on structured similarity transformations and eigenvalue deflation, the proposed framework allows existing recurrence relations to be modified without recomputing the entire solution from scratch. Moreover, the introduced scaling strategy improves the conditioning and enables the stable construction of larger recurrence matrices. Numerical experiments confirmed the accuracy, stability, and efficiency of the proposed methods.

Future research will focus on extending the proposed framework to other classes of MOPs, including those associated with recurrence relations arising from different paths of the multi-indices beyond the step-line case, such as nearest-neighbor MOPs \cite{Van-nearestneighbor} or MOPs on the complex plane \cite{MR2358391}. It would also be of interest to develop updating and downdating procedures for more general inner products, for example those involving derivatives of MOPs. Finally, although the proposed algorithms exhibit satisfactory numerical behavior in the experiments presented here, a thorough stability analysis remains an important topic for future investigation.
\section*{Funding}
The research was partially supported by the Research Council KU Leuven (Belgium), project C16/21/002 (Manifactor: Factor Analysis for Maps into Manifolds) and by the Fund for Scientific Research -- Flanders (Belgium), projects G0A9923N (Low rank tensor approximation techniques for up- and downdating of massive online time series clustering) and G0B0123N (Short recurrence relations for rational Krylov and orthogonal rational functions inspired by modified moments).
\section*{Declarations}
\textbf{Conflict of interest:} The authors declare that they have no conflict of interest.
\bibliographystyle{siam}
\bibliography{references}
\end{document}